\theoremstyle{plain}
\newtheorem{lemma}{Lemma}[section]
\newtheorem{corollary}[lemma]{Corollary}
\newtheorem{maintheorem}{Theorem}
\newcommand{\interior}{\operatorname{int}}
\newcommand{\osc}{\operatorname{osc}}
\newcommand{\dist}{\operatorname{dist}}
\newcommand{\graph}{\operatorname{graph}}
\theoremstyle{definition}
\newcommand{\halpha}{\alpha^{\frac{1}{2(D-1)}}}
\newcommand{\Dalpha}{\sqrt[D]{\alpha}}
\newcommand{\abs}[1]{\left\lvert#1\right\rvert}
\title{Building Expansion for Generalizations of Viana Maps}
\author{V. Horita, N. Muniz, and O. Sester
\thanks{Work partially supporte by FAPESP (2014/21815-5 and 2019/10269-3).}}
\date{\today}
\begin{document}

\maketitle

\let\thefootnote\relax\footnote{2020 {\it Mathematics Subject Classification}.
Primary 37D25, 37C05.}
\let\thefootnote\relax\footnote{{\it Key words}.
Nonuniformly hyperbolic systems, Viana maps, Lyapunov exponents .}

\begin{abstract}
In the seminal paper \cite{Vi97}, Viana built examples of maps presenting 
two positive Lyapunov exponents 
exploring  skew-products of a (uniformly) expanding map and a quadratic map (order 2 critical point) 
perturbed by some level of \emph{noise}.
Here we extend that construction replacing the quadratic underlying dynamics 
by maps with a more degenerated critical point.  
\end{abstract}


\section{Introduction}
\label{s.intr}

In dimension one, much has been studied about the richness of chaotic behavior (positive Lyapunov exponent) present in families at a parameter, see Jakobson \cite{Ja81}, Benedicks and Carleson \cite{BC85}, Graczyk and Swiatek \cite{GS97}, and Koslovski, Shen, and van Strien \cite{KSS07}. 
On the other hand, this phenomenon does not occur persistently since hyperbolicity is open and dense for one dimensional $C^k$-maps.
  
In higher dimension, Viana \cite{Vi97} brought us an impressive account of the existence of multidimensional nonhyperbolic behaviour in a persistent way.
His work introduced what now is referred as Viana maps, i.e., $C^3$-per\-tur\-ba\-tion of a $C^3$-skew-product 
$\varphi_\alpha \colon \mathbb{S}^1 \times \mathbb{R} \to \mathbb{S}^1 \times \mathbb{R}$ 
given by $\varphi_\alpha(\theta, x) = (\hat{g}(\theta) , a(\theta) - x^2)$,
where $\hat{g} \colon \mathbb{S}^1 \to \mathbb{S}^1$ is an expanding map of the circle and $a(\theta) = a_0 + \alpha \phi(\theta)$.
The map $\phi$ is a Morse function and $a_0 \in (1,2)$ is fixed such that $x=0$ is a pre-periodic point for the map $h(x) = a_0 - x^2$.
There exists a compact interval $I_0 \subset (-2,2)$ such that $\varphi_\alpha(\mathbb{S}^1 \times I_0) \subset \interior (\mathbb{S}^1 \times I_0)$.
Particular cases that satisfy the above condition are $\phi(\theta) = \sin 2\pi \theta$ and and $\hat{g}(\theta) = d \theta \mod 1$.
Thus, $\varphi_\alpha$ has the form.
$$
\varphi_\alpha(\theta, x) = (d \theta \mod 1 , \alpha \sin 2\pi \theta + h(x)).
$$
Viana showed that $C^3$-perturbation of  $\varphi_\alpha$ has two positive Lyapunov exponents, for $d \ge 16$.
Later, this result was extended for $d \ge 2$ by Buzzi, Sester, and Tsuji in \cite{BST03}.
The factor $\alpha \sin 2\pi \theta$ can be though as an $\alpha$-perturbation of the quadratic map $h$.

Skew products of this type, with a curve of neutral fixed points were exploited by Gouzel \cite{G06}.
More extensions of the result were obtained by Schnellmann in \cite{Sch08} and \cite{Sch09} with  Misiurewicz-Thurston quadratic maps 
as the basis dynamics, Huang-Shen in \cite{HS13} and Varandas \cite{Varandas12} in a generalized context.
Moreover, Gao in \cite{Gao21} consider Benedicks-Carleson maps instead circle uniformly maps $\hat{g}$ in the first factor to obtain two positive Lyapunov exponents.
All works mentioned deal with a quadratic critical point.

Here, we consider maps $h_D \colon \mathcal{M} \to \mathcal{M}$, $D \ge 2$ (to be defined in a little while), where $\mathcal{M}=\mathbb{S}^1$ if $D$ is odd and $\mathcal{M} = I_0 \subset \mathbb{R}$ an interval if $D$ is even.
The map $h_D$ presents one critical point $\tilde{x}$ of order $D$, that is, $h_D^{(j)}(\tilde{x}) = 0$, for all $1 \le j < D$, and $h_D^{(D)}(\tilde{x}) \neq 0$. 
Let $\varphi_{\alpha,D} \colon \mathbb{S}^1 \times \mathcal{M} \to \mathbb{S}^1 \times \mathcal{M}$ be defined by
$$
\varphi_{\alpha,D}(\theta, x) = (d \theta \mod 1 , \alpha \sin 2\pi \theta + h_D(x)).
$$ 

Let us be more specific on $h_D$.
We consider intervals $I',I'' \subset \mathcal{M}$ centered in $\tilde{x}$ depending on $D$, such that $I''$ is a proper sub-interval of $I'$. 
We begin defining the map for positive odd number $D \ge 3$.
In \cite{HMS07} are considered one-parameter families maps in $\mathbb{S}^1$ with one cubic critical point and, for a positive measure set of parameter, the maps are non-uniformly expanding.
Here, we consider $C^{2D+2}$-maps $h_{2D+1} \colon \mathbb{S}^1 \to \mathbb{S}^1$, $D \ge 1$ positive integer, defined by 
\begin{equation}
\label{e.hD}
h_{2D+1}(x) = \left\{
\begin{array}{lll}
2x \mod 1 & \text{if} & x\in \mathbb{S}^1 \setminus I' \\
A(x-1/2)^{2D+1} & \text{if} & x \in I'' 
\end{array} 
\right. ,
\end{equation}
where $A$ is a positive constant chosen such that the derivative of $h_{2D+1}$ at the extreme points of $I''$ is equal to $7/4$. 
In each component of $I' \setminus I''$ we suppose that the first and second derivatives of $h_{2D+1}$ are monotone.
Then, $\tilde{x} = 1/2$ is the unique critical point of $h_{2D+1}$.
\begin{figure}[phtb]
\psfrag{0}{$0$}
\psfrag{1}{$1/2$}
\psfrag{-1}{$-1/2$}
\psfrag{2}{$1$}
\centering
\includegraphics[height=6cm]{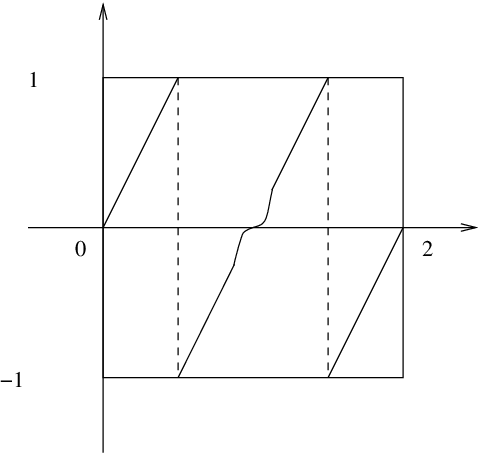}
\caption{Map with odd critical point} 
\label{f.example}
\end{figure}

Now, we take $I' = (-1,1)$ and we consider $h_{2D} \colon I_0 \to I_0$, $D \ge 1$ positive integer, a $C^{2D+1}$-map defined by  
\begin{equation}
\label{e.h2D}
h_{2D}(x) = \left\{
\begin{array}{lll}
a_0 - x^2 & \text{ if } & x \in I_0 \setminus I'  \\
a_0 - A x^{2D} & \text{ if } & x \in I'' 
\end{array}
\right. ,
\end{equation}
where, as before, $A$ is such that at the extreme points of $I''$ the modulus of derivative of $h_D$ is equal $7/4$, and in each component of $I' \setminus I''$ the first and second derivatives are monotone. 
As defined, $h_{2D}$ has a unique (pre-fixed) critical point $\tilde{x}=0$ of order $2D$.

\begin{maintheorem}
\label{mt.general}
For every sufficiently small $\alpha > 0$, and for all integer number $d \ge 16$, the map $\varphi_{\alpha,D}$, has two positive Lyapunov exponents at Lebesgue every point $(\theta, x) \in \mathbb{S}^1 \times \mathcal{M}$. 
Moreover, the same holds for every map $\varphi$ sufficiently close to $\varphi_{\alpha,D}$ in $C^{D}(\mathbb{S}^1 \times \mathcal{M})$.
\end{maintheorem}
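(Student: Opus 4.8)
We follow the blueprint of Viana \cite{Vi97}, in the streamlined form of Buzzi, Sester, and Tsuji \cite{BST03}: one of the two exponents is elementary, and the whole content lies in producing the positive fibre exponent. We first treat $\varphi_{\alpha,D}$ itself. For $\alpha$ small the map $x\mapsto\alpha\sin 2\pi\theta+h_D(x)$ sends $\mathcal{M}$ into its interior, so $\mathbb{S}^1\times\mathcal{M}$ is forward invariant, and the block-triangular derivative cocycle has the $\theta$-direction as a dominated and uniformly expanded direction, with factor $d$. This already yields a Lyapunov exponent equal to $\log d>0$ at every point, and, coming from an invariant expanded cone field, it persists for every map $C^1$-close to $\varphi_{\alpha,D}$. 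There remains to show that the fibre exponent is positive, i.e.\ that $\liminf_n\frac1n\sum_{j=0}^{n-1}\log|h_D'(x_j)|>0$ along $\Leb$-a.e.\ orbit with fibre coordinates $(x_j)$.

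To control this Birkhoff sum we run Viana's \emph{admissible curve} scheme. Fix a class of nearly horizontal $C^2$ graphs $\gamma=\{(\theta,X(\theta)):\theta\in J\}$, $J\subset\mathbb{S}^1$ an arc, with $|X'|$ and $|X''|$ bounded by a fixed multiple of $\alpha$, and show the class is invariant: the image of an admissible curve under each of the $d$ inverse branches of $\theta\mapsto d\theta$ is again admissible, since the contraction by $1/d$ in the base absorbs the amplification of slope and curvature produced by $h_D'$ and $h_D''$ --- this is where $d\ge 16$ enters, exactly as in \cite{Vi97}. Following the sequence $\gamma_n=\varphi_{\alpha,D}^n(\gamma_0)$ of images of a fixed admissible curve $\gamma_0$, the fibre derivative along $\gamma_n$ is a bounded perturbation of $\prod_{j<n}h_D'(X_j(\theta))$, so we are reduced to controlling how often and how deeply the orbit of a point of $\gamma_n$ visits a small neighbourhood $\Delta_\delta=(\tilde{x}-\delta,\tilde{x}+\delta)$ of the critical point.

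The statistical argument rests on two estimates. First, a metric one,
\[
\Leb\{\,\theta\in J:X_n(\theta)\in\Delta_\delta\,\}\le C\,\delta^{1/2}\quad\text{for every admissible curve,}
\]
where the exponent $1/2$ comes from the Morse (order $2$) geometry of $\theta\mapsto\sin 2\pi\theta$ and is unaffected by $D$. Second, a bound-period and distortion estimate controlling the loss of fibre derivative caused by a return of depth $p$ (i.e.\ $|x_n-\tilde{x}|\approx e^{-p}$) and the number of subsequent iterates needed to recover it; this is where the degeneracy enters. Since $h_D(x)=c+A(x-\tilde{x})^D(1+o(1))$ near $\tilde{x}$, such a return costs $\log|h_D'(x_n)|\approx-(D-1)p$ and places $x_{n+1}$ within $e^{-Dp}$ of the critical-value curve, so the recovery time and the distortion bounds must be redone with these $D$-dependent scales. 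Here we use that $\tilde{x}$ is pre-fixed: its forward orbit lands in finitely many steps on a repelling fixed point, stays a definite distance from $\tilde{x}$, and provides the expansion that drives the recovery; and we use the $C^{D}$-regularity of $h_D$, which gives $|h_D'(x)|\gtrsim|x-\tilde{x}|^{D-1}$ with $\log|x-\tilde{x}|^{D-1}$ integrable near $\tilde{x}$. With these ingredients, a large-deviation estimate shows that for a fixed admissible curve the set of $\theta$ for which $\frac1n\sum_{j<n}\log|h_D'(X_j(\theta))|$ fails to exceed a fixed $c>0$ has measure decaying exponentially in $n$; Borel--Cantelli gives the conclusion on each admissible curve, hence, applying it to horizontal lines (which are admissible) together with Fubini, at $\Leb$-a.e.\ $(\theta,x)$.

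For $\varphi$ merely $C^{D}$-close to $\varphi_{\alpha,D}$ the skew-product form is lost, but there is still an invariant cone field around the $\theta$-direction that is strongly expanded; we replace admissible curves by central curves tangent to the complementary cone, the critical region by the set where the central derivative is small, and note that the order-$D$ degeneracy of the central critical set survives because ``the $D$-th central derivative is nonzero'' is an open condition in the $C^{D}$ topology. The steps above then go through. The main obstacle is precisely the quantitative bookkeeping of the third paragraph: one must check that the bound-period and large-deviation estimates still close for the fixed threshold $d\ge 16$ \emph{and uniformly in $D$}, even though deep returns are now more expensive by a factor of order $D$. This forces one to track carefully the competition between the $e^{-Dp}$-closeness to the (eventually fixed, hence eventually expanding) critical orbit --- which accelerates the recovery --- and the $e^{-(D-1)p}$ loss of derivative that the return produces.
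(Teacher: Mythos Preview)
Your outline follows the same route as the paper: admissible curves, the $\delta^{1/2}$ measure estimate coming from the Morse geometry of $\sin 2\pi\theta$, a bound-period/recovery lemma adapted to the order-$D$ critical point, and a large-deviations argument on the depths of critical returns, concluded by Borel--Cantelli and Fubini over horizontal curves. Two corrections are in order. First, the measure of the bad set of $\theta$ does not decay exponentially in $n$ but only like $e^{-\gamma\sqrt{n}}$: the large-deviations step runs over roughly $\sqrt{n}$ nearly independent blocks of length $m\approx\sqrt{n}$, since one needs a waiting time of order $M\approx\log(1/\alpha)$ before the $J(r)$ estimate (the paper's Lemma~\ref{v.lemma2.6}) becomes available, and deep returns of depth $r\le m$ must be separated out. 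This stretched-exponential rate is still summable, so your Borel--Cantelli step survives, but the stated rate is wrong. Second, and more substantively, your final paragraph correctly \emph{names} the main obstacle --- that the recovery and large-deviation bounds must close for $d\ge 16$ with the $D$-dependent costs --- but explicitly leaves it as something ``one must check''. That check is precisely the content of the paper: Lemma~\ref{v.lemma2.4} shows that a return with $|x-\tilde{x}|\approx\sqrt[D]{\alpha}\,e^{-r}$ loses $e^{-(D-1)r}$ in derivative but recovers a factor $\alpha^{-1/D+\eta/(D-1)}$ over the next $N$ iterates, and Lemma~\ref{v.lemma2.6} produces the tail bound $m(\{\theta:r_i\ge r\})\le C_3e^{-5\beta r}$ needed to control $\sum_{i\in G}(D-1)r_i$. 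Without carrying out these quantitative verifications your sketch is a correct roadmap but not yet a proof.
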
 

The proof of Theorem~\ref{mt.general} follows closely the proof in \cite{Vi97} where it is assumed $d \ge 16$. 
The general case $d \ge 2$ can be adapted from \cite{BST03}. 
In this paper, we deal only with the case where $d \ge 16$.

Let us mention that the presence of two positive Lyapunov exponents is an indication of the existence of absolutely continuous invariant probability measure.
For Viana maps and extensions in the case of a quadratic critical point, see Alves \cite{Al00}, Alves-Viana \cite{AV02}, and Gao \cite{Gao21}.

\section{Preliminary statements and results}

We can assume that $\varphi \colon \mathbb{S}^1 \times \mathcal{M} \to \mathbb{S}^1 \times \mathcal{M}$ has the form
\begin{equation}
\label{simplificacao}
\varphi (\theta,x) = (g(\theta), f(\theta,x)), \quad \text{with } \partial_x f(\theta,x) = 0 \text{ if and only if } x = \tilde{x},
\end{equation}
and we prove that the theorem holds as long as 
$$
|\varphi - \varphi_{\alpha,D} |_{C^D} \le \alpha \quad \text{ on } \mathbb{S}^1 \times \mathcal{M}.
$$
To overcome the simplifying hypothesis (\ref{simplificacao}) and so conclude the proof of Theorem \ref{mt.general} it is enough to follow exactly the same approach as in  \cite[Section 2.5]{Vi97}.

\subsection{Admissible curves}
We say that a $C^2$-curve $\hat{X}$ is an \emph{admissible curve} if $\hat{X} = \graph{X}$, with  $X \colon \mathbb{S}^1 \to \mathcal{M}$, satisfies
$$
|X'(\theta)| \le \alpha \quad \text{ and } \quad |X''(\theta)| \le \alpha, \qquad
\text{for all } \theta \in \mathbb{S}^1.
$$
We suppose that $\alpha$ is small such that $\alpha < A^{-1}$.

Given $\omega \subset \mathbb{S}^1$, we write $\hat{X}|_\omega =
\graph (X|_\omega)$.

Let $\tilde{\theta}_0 \in \mathbb{S}^1$ be the fixed point of $g$ and denote by $\tilde{\theta}_1, \cdots , \tilde{\theta}_d = \tilde{\theta}_0$ its $d$ pre-images under $g$ ordered according to the orientation of $\mathbb{S}^1$.
We consider \emph{Markov partitions} $\mathcal{P}_n$ of $\mathbb{S}^1$ defined by,
\begin{align*}
\mathcal{P}_1 & = \{[\tilde{\theta}_{j-1}, \tilde{\theta}_j) \colon 1 \le j \le d \}, \text{ and} \\
\mathcal{P}_{n+1}&  = \{ \text{connected components of } g^{-1}(\omega) \colon \omega \in \mathcal{P}_n \}.
\end{align*}

\begin{lemma}
\label{v.lemma2.1}
Let $X$ be an admissible curve. If $\omega \in \mathcal{P}_n$ then $\varphi^n(\hat{X}|_\omega)$ is
also an admissible curve.
\end{lemma}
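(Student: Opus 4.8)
The plan is to prove this by induction on $n$, the crucial point being that a single application of $\varphi$ improves (or at worst controls) the admissibility bounds thanks to the contraction in the base direction. The base case $n=0$ is trivial. For the inductive step, suppose $\omega \in \mathcal{P}_{n+1}$; then $g(\omega) \in \mathcal{P}_n$ and, writing $\omega' = g(\omega)$, we have $\varphi^{n+1}(\hat X|_\omega) = \varphi^n\bigl(\varphi(\hat X|_\omega)\bigr)$, while $\varphi(\hat X|_\omega)$ is the graph over $\omega'$ of some function $Y$. So it suffices to show that $\varphi$ sends an admissible curve over $\omega$ to an admissible curve over $\omega'$ — i.e. that one step preserves admissibility — and then invoke the inductive hypothesis on $\omega' \in \mathcal{P}_n$.

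To carry out the one-step estimate, I would compute $Y$ explicitly. If $\hat X = \graph X$ and $\varphi(\theta,x) = (g(\theta), f(\theta,x))$, then the image is $\{(g(\theta), f(\theta, X(\theta)))\}$, so $Y(g(\theta)) = f(\theta, X(\theta))$, and differentiating via the inverse branch $g^{-1}$ (call its local inverse $\psi$, so $\psi'(s) = 1/g'(\psi(s))$) gives
$$
Y'(s) = \bigl(\partial_\theta f + \partial_x f \cdot X'\bigr)(\psi(s)) \cdot \psi'(s).
$$
Now $|\partial_\theta f| \le \alpha$ and $|\partial_x f| \le C$ (bounded, since $f$ is $C^D$-close to $\alpha\sin 2\pi\theta + h_D(x)$ on a compact set) and $|X'| \le \alpha$, while $|g'| \ge \lambda > 1$ (uniform expansion of the base, with $\lambda \ge 2$ since $g$ is $C^1$-close to $x \mapsto d\theta$ with $d \ge 16$). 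Thus $|Y'(s)| \le (\alpha + C\alpha)/\lambda = \alpha(1+C)/\lambda \le \alpha$ provided $\lambda \ge 1 + C$, which holds for $d$ large enough (and in general after finitely many iterates, which is why the statement is about $\mathcal{P}_n$ for the appropriate range; in the $d \ge 16$ regime one checks $1+C < 2$ directly from the size of the $C^0$-perturbation). The second derivative is handled the same way: differentiating once more produces terms involving $\partial^2 f$, $X''$, $\psi''$, each either bounded by a constant or by $\alpha$, and every term carries at least one factor of $\psi' = 1/g' $ or the small quantities, so $|Y''| \le \alpha$ again follows from the expansion dominating the bounded geometry of $f$ and $g$. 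This gives admissibility of $\varphi(\hat X|_\omega)$ over $\omega'$, completing the induction.

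The main obstacle is the bookkeeping in the $|Y''|$ bound: one must verify that the "bad" term $\partial_x f \cdot X''$, which is only bounded by $C\alpha$ rather than by $\alpha^2$, is still tamed — here one uses that it is multiplied by $(\psi')^2 \le \lambda^{-2}$, so it contributes $\le C\alpha/\lambda^2 \le \alpha$, again for $\lambda$ large, and the curvature term $\partial_x f \cdot \psi'' $ is controlled because $\psi'' = -g''/(g')^3$ with $g''$ small (as $g$ is close to a linear map) and $(g')^3$ large. Care is also needed at the critical point: on the piece of $\omega$ where $X(\theta)$ is near $\tilde x$, $\partial_x f$ is small, which only helps; nowhere do we need a lower bound on $\partial_x f$ for this lemma, so the degenerate order-$D$ critical point causes no extra difficulty here — it will only matter later when proving expansion. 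Once the constants are arranged so that the contraction factor $(1+C)/\lambda < 1$, admissibility is not merely preserved but the bounds are actually non-increasing under $\varphi$, and the induction closes cleanly.
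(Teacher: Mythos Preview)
Your strategy---induction on $n$, reducing to a one-step estimate via the chain rule for $Y(g(\theta)) = f(\theta,X(\theta))$---is exactly the paper's approach. The structure is right, but several of your constant estimates are off and one of them reflects a real misconception.

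First, $|\partial_\theta f| \le \alpha$ is wrong: differentiating $\alpha\sin 2\pi\theta$ gives $2\pi\alpha\cos 2\pi\theta$, and after the $C^D$-perturbation the paper records $|\partial_\theta f|\le 8\alpha$ and $|\partial_{\theta\theta}f|\le 50\alpha$. Second, and more importantly, your claim that ``in the $d\ge 16$ regime one checks $1+C<2$'' is false. Here $C$ bounds $|\partial_x f|\approx |h_D'|$, which reaches $2$ on $\mathbb{S}^1\setminus I'$ (odd case) or up to $4$ (even case); the paper uses $|\partial_x f|\le 4+\alpha$. The fiber derivative is \emph{not} small---$f$ being $\alpha$-close to $\tilde f$ does not make $\partial_x f$ small, only $\partial_x f - h_D'$ small. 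What actually makes the estimate close is that $|g'|\ge 15$ when $d\ge 16$, so the paper gets $|Y'|\le \tfrac{1}{15}(8\alpha+5\alpha)\le\alpha$.

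For $|Y''|$ you also need to track the $D$-dependence of $\partial_{xx}f$: near the critical point $|h_D''(x)|\sim |x-\tilde x|^{D-2}$ and the paper bounds it globally by $7(D-1)/(4|I''|)$, a constant that grows with $D$. This term is saved because it is multiplied by $(X')^2\le\alpha^2$, but one must explicitly take $\alpha$ small relative to that $D$-dependent constant (the paper imposes $\alpha < \bigl(7(D-1)/(4|I''|)\bigr)^{-1}$). Your sketch hides this under ``bounded by a constant,'' which is true but omits the only place where the order of the critical point enters this lemma. Once you correct these constants, your argument coincides with the paper's.
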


\begin{proof}
The proof follows closely \cite[Lemma 2.1]{Vi97}. Let $Y \colon \mathbb{S}^1 \to I$ be defined by
$Y(g(\theta)) = f(\theta, X(\theta))$, $\theta \in \omega \in \mathcal{P}_1$.
The estimates on the derivatives of $g$ are taken from \cite{Vi97}, i.e., $|g'| \ge 15$ and
$|g''| \le \alpha$.

The derivatives of $f$ are $\alpha$ perturbations of those derivatives of $\tilde{f}(\theta,x) = \alpha \sin 2\pi \theta + h_D(x)$. 
So,
\begin{align*}
& |\partial_\theta f| \le \alpha |2\pi \cos 2\pi \theta| + \alpha \le 8 \alpha, \\
& |\partial_{\theta \theta} f| \le \alpha |4 \pi^2 \sin 2\pi \theta| + \alpha \le 50 \alpha, \text{ and } \\
&  |\partial_{\theta x} f| \le \alpha.
\end{align*}

Note that we have $|h_D'(x)| \le 7/4$ in $I''$ and $7/4 \le |h_D'(x)| \le 2$ in $I' \setminus I''$. 
In $\mathcal{M} \setminus I'$, $|h_D'(x)| = 2$ if $D$ is odd, and $|h_D'(x)| \le 4$ if $D$ is even.
Anyway, $|h_D'(x)| \le 4$ for every $x \in \mathcal{M}$. 

For the second derivative, in $\mathcal{M} \setminus I'$, we have $|h_D''(x)| = 0$ if $D$ is odd, and $|h_D''(x)| \le 2$ if $D$ is even.
We are assuming that in the extreme points $x_i$, $i = 1,2$, of $I''$ the derivative has modulus $7/4$, that is to say 
\begin{equation}
7/4 = |h_D'(x_i)| = DA|x_i-\tilde{x}|^{D-1} = D A \left(\frac{|I''|}{2}\right)^{D-1} .
\end{equation}
Then,
$$
|h_D''(x)| \le D (D-1) A |x - \tilde{x}|^{D-2} \le D (D-1) A \left(\frac{|I''|}{2}\right)^{D-2} \le \frac{7(D-1)}{4|I''|}.
$$

From the fact that $|h_{2D+1}'' (x)| = 0$ for every $x \in \mathcal{M} \setminus I'$, $|h_{2D}''(x)| = 2$ for every $x \in \mathcal{M} \setminus I'=(-1,1)$, and $h_D''$ is monotone in each connected component of $I' \setminus I''$, we have $|h_D''(x)| < 7(D-1)/(4|I''|)$, for every $x$ in $\mathcal{M}$.

The fact that $\partial_x \tilde{f}(\theta,x) = h_D'(x)$ and $\partial_{xx} \tilde{f}(\theta,x) = h_D''(x)$ implies, for all $x \in \mathcal{M}$,
$$
|\partial_x f| \le 4 + \alpha \qquad \text{ and } \quad |\partial_{xx} f| \le \frac{7(D-1)}{4|I''|} + \alpha \le 2 \alpha^{-1},
$$
where the last inequality uses that $\alpha$ is small, and so we may assume $\alpha < \left(7(D-1)/(4|I''|)\right)^{-1}$.
Thus,
\begin{align*}
|Y'| & = \left| \frac{1}{g'} (\partial_\theta f + \partial_x f X') \right| \le
\frac{1}{15}(8\alpha + 5 \alpha) \le \alpha  \quad \text{ and}\\
|Y''| & = \left| \left( \frac{1}{g'} \right)^2 (\partial_{\theta\theta} f +
2 \partial_{\theta x} f X' + \partial_{xx}f(X')^2 + \partial_x f X'' - Y'g'') \right| \le \\
& \le \frac{1}{15^2}(50\alpha + 2 \alpha + \frac{7(D-1)}{4|I''|} \alpha^2 + 5\alpha + \alpha^2) \le \alpha.
\end{align*}
The lemma is proved.
\end{proof}

Next, we state a property of admissible curves.

\begin{lemma}
\label{l.lemma2.2}
Let $\hat{X} = \graph{X}$ be an admissible curve and denote $\hat{X} (\theta) = (\theta, X(\theta))$, $\hat{Z} = \varphi(\hat{X}(\theta)) = (g(\theta),Z(\theta))$.
Then, given any interval $I\subset \mathcal{M}$, we have $m( \{\theta \in \mathbb{S}^1 \colon \hat{Z}(\theta) \in \mathbb{S}^1 \times I \}) \le 4|I|/\alpha + 2 \sqrt{|I|/\alpha}$.
\end{lemma}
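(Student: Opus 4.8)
The plan is to realise $Z(\theta)=f(\theta,X(\theta))$ as a small perturbation of $\theta\mapsto\alpha\sin2\pi\theta$ and then apply two elementary one–variable estimates: a $C^1$ function with $|Z'|\ge\lambda\alpha$ on an interval meets any level set $Z^{-1}(I)$ there in length $\le|I|/(\lambda\alpha)$; and a $C^2$ function with $Z''$ of constant sign and $|Z''|\ge\lambda\alpha$ on an interval has $Z'$ vanishing there at most once, so $|Z'(\theta)|\ge\lambda\alpha\,\dist(\theta,\{Z'=0\})$, and a single integration yields $m(Z^{-1}(I))\le 2\sqrt{2|I|/(\lambda\alpha)}$ on that interval. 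Differentiating with $\tilde f(\theta,x)=\alpha\sin2\pi\theta+h_D(x)$,
\[
Z'(\theta)=2\pi\alpha\cos2\pi\theta+\rho(\theta),\qquad Z''(\theta)=-4\pi^2\alpha\sin2\pi\theta+\sigma(\theta),
\]
where $\rho=\partial_\theta(f-\tilde f)+\partial_x f\,X'$ and $\sigma=\partial_{\theta\theta}(f-\tilde f)+2\partial_{\theta x}f\,X'+\partial_{xx}f\,(X')^2+\partial_x f\,X''$. The bounds collected in the proof of Lemma~\ref{v.lemma2.1} (namely $|\partial_x f|\le4+\alpha$, $|\partial_\theta(f-\tilde f)|,|\partial_{\theta\theta}(f-\tilde f)|,|\partial_{\theta x}f|\le\alpha$ and $|\partial_{xx}f|\le C_D$) combined with admissibility $|X'|,|X''|\le\alpha$ give $|\rho(\theta)|,|\sigma(\theta)|\le5\alpha+O(\alpha^2)$; for $\alpha$ small this stays strictly below $2\pi\alpha$, and hugely below $4\pi^2\alpha$.

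First I would fix $\lambda_1=\tfrac12$, put $B=\{|Z'|<\lambda_1\alpha\}$ and $G=\mathbb{S}^1\setminus B$. On $B$ we have $2\pi\alpha|\cos2\pi\theta|\le|Z'|+|\rho|<\bigl(\lambda_1+5+O(\alpha)\bigr)\alpha$, so $|\cos2\pi\theta|<c_0$ for an explicit constant $c_0<1$; hence $B$ is contained in the two arcs $\tilde B_1,\tilde B_2$ about $\theta=\tfrac14,\tfrac34$ cut out by $|\cos2\pi\theta|<c_0$. On each $\tilde B_i$ one has $|\sin2\pi\theta|>\sqrt{1-c_0^2}$ with constant sign, so $Z''$ has constant sign and $|Z''|\ge\lambda_2\alpha$ with $\lambda_2=8$ — the choice of $\lambda_1$ (hence of $c_0$) being exactly what makes $4\pi^2\sqrt{1-c_0^2}-\bigl(5+O(\alpha)\bigr)\ge8$ for $\alpha$ small. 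In particular $Z'$ is strictly monotone on $\tilde B_i$, so $B\cap\tilde B_i$ is a sub-arc, and consequently $G$ has at most two connected components.

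To conclude I would add the two contributions. On each of the $\le2$ components of $G$, $Z$ is strictly monotone with $|Z'|\ge\lambda_1\alpha$, so $Z^{-1}(I)$ meets it in an arc of length $\le|I|/(\lambda_1\alpha)$; thus $m(G\cap Z^{-1}(I))\le 2|I|/(\lambda_1\alpha)=4|I|/\alpha$. On each $\tilde B_i$ the second elementary estimate applies with $\lambda=\lambda_2$, so $m(\tilde B_i\cap Z^{-1}(I))\le 2\sqrt{2|I|/(\lambda_2\alpha)}$, whence $m(B\cap Z^{-1}(I))\le 4\sqrt{2|I|/(\lambda_2\alpha)}=2\sqrt{|I|/\alpha}$. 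Since $\{\theta:\hat{Z}(\theta)\in\mathbb{S}^1\times I\}=Z^{-1}(I)$, adding gives $m(Z^{-1}(I))\le4|I|/\alpha+2\sqrt{|I|/\alpha}$. The only non-routine ingredient is the second–order (van der Corput type) bound on the arcs $\tilde B_i$; the delicate point is purely bookkeeping — choosing $\lambda_1,\lambda_2$ and checking that $|\rho|,|\sigma|$ are small enough (which forces $\alpha$, and the $C^D$-distance $|\varphi-\varphi_{\alpha,D}|$, to be small) so that $G$ and $\tilde B_1\cup\tilde B_2$ cover $\mathbb{S}^1$ while the constants collapse to precisely $4$ and $2$.
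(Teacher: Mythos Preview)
Your argument is correct and is precisely the approach of \cite[Lemma~2.2]{Vi97}, to which the paper simply defers: split $\mathbb{S}^1$ into the set where $|Z'|$ is bounded below by a multiple of $\alpha$ (yielding the $4|I|/\alpha$ term on at most two arcs of monotonicity) and the two arcs around the critical points of $\theta\mapsto\sin 2\pi\theta$ where $|Z''|$ is bounded below (yielding the $2\sqrt{|I|/\alpha}$ term via the second-order estimate). The only bookkeeping difference from Viana's quadratic case is that here $|\partial_x f|\le 4+\alpha$ and $|\partial_{xx}f|$ is bounded by a $D$-dependent constant, which you correctly absorb into the $O(\alpha^2)$ error terms for $\rho$ and $\sigma$.
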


\begin{proof}
Analogous to \cite[Lemma 2.2]{Vi97}.
\end{proof}

\begin{corollary}
\label{c.corollary2.3}
there is $C_1>0$ such that, given $\hat{X}_0 = \graph{X_0}$ an admissible curve and $I\subset \mathcal{M}$ an interval with $|I| \le \alpha$ we have
$$
m( \{\theta \in \mathbb{S}^1 \colon \hat{X_j}(\theta) \in \mathbb{S}^1 \times I \}) \le C_1 \sqrt{\frac{|I|}{\alpha}}, \text{ for every } j \ge 1.
$$
\end{corollary}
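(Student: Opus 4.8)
The plan is to reduce to Lemma~\ref{l.lemma2.2} by peeling off the last iterate and splitting the base circle along the Markov partition $\mathcal{P}_{j-1}$. Fix $j\ge 2$ (the case $j=1$ being Lemma~\ref{l.lemma2.2} applied directly to the admissible curve $\hat X_0$), and for $\theta\in\mathbb{S}^1$ write $\hat X_j(\theta)=\varphi^j(\hat X_0(\theta))=(g^j(\theta),X_j(\theta))$. For each $\omega\in\mathcal{P}_{j-1}$, Lemma~\ref{v.lemma2.1} gives that $\varphi^{j-1}(\hat X_0|_\omega)$ is an admissible curve, which I would write as $\graph X^\omega$ with $X^\omega\colon\mathbb{S}^1\to\mathcal{M}$; recall that $g^{j-1}|_\omega\colon\omega\to\mathbb{S}^1$ is a $C^2$-diffeomorphism onto $\mathbb{S}^1$, and that for $\theta\in\omega$ with $\eta=g^{j-1}(\theta)$ one has $X_j(\theta)=f(\eta,X^\omega(\eta))$, i.e. $\hat X_j(\theta)=\hat Z^\omega(g^{j-1}(\theta))$ where $\hat Z^\omega=\varphi(\graph X^\omega)$. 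Hence
$$
\{\theta\in\omega:\hat X_j(\theta)\in\mathbb{S}^1\times I\}=(g^{j-1}|_\omega)^{-1}\bigl(\{\eta\in\mathbb{S}^1:\hat Z^\omega(\eta)\in\mathbb{S}^1\times I\}\bigr),
$$
and Lemma~\ref{l.lemma2.2} applied to the admissible curve $\graph X^\omega$ bounds the measure of the set on the right by $4|I|/\alpha+2\sqrt{|I|/\alpha}$, which is $\le 6\sqrt{|I|/\alpha}$ since $|I|\le\alpha$ forces $|I|/\alpha\le\sqrt{|I|/\alpha}$.

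Next I would transfer this estimate from the target circle back to $\omega$ via bounded distortion. Since $|g'|\ge 15$ and $|g''|\le\alpha$, the standard chain-rule estimate
$$
\Bigl|\log\tfrac{|(g^{n})'(\theta_1)|}{|(g^{n})'(\theta_2)|}\Bigr|\le\frac{\sup|g''|}{\inf|g'|}\sum_{k=0}^{n-1}|g^k\theta_1-g^k\theta_2|,
$$
together with the fact that the images $g^k(\omega)$ of an element $\omega\in\mathcal{P}_n$ have diameters decreasing at least geometrically with ratio $1/15$, gives a constant $C_g>0$ (one may take $C_g=e^{\alpha/14}$), independent of $n$ and of the element, with $|(g^{n})'(\theta_1)|/|(g^{n})'(\theta_2)|\le C_g$ for $\theta_1,\theta_2$ in the same element of $\mathcal{P}_n$. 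Because $g^{j-1}$ maps $\omega\in\mathcal{P}_{j-1}$ onto all of $\mathbb{S}^1$, writing $m(\omega)=\int_{\mathbb{S}^1}|(g^{j-1})'\circ(g^{j-1}|_\omega)^{-1}|^{-1}$ and comparing with the supremum yields $\sup_{\omega}|(g^{j-1})'|^{-1}\le C_g\,m(\omega)$, so the change of variables $\eta=g^{j-1}(\theta)$ gives
$$
m\bigl(\{\theta\in\omega:\hat X_j(\theta)\in\mathbb{S}^1\times I\}\bigr)\le C_g\,m(\omega)\cdot 6\sqrt{|I|/\alpha}.
$$

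Finally, since $\{\omega\in\mathcal{P}_{j-1}\}$ partitions $\mathbb{S}^1$, I would sum over $\omega$ and use $\sum_{\omega\in\mathcal{P}_{j-1}}m(\omega)=1$ to get $m(\{\theta:\hat X_j(\theta)\in\mathbb{S}^1\times I\})\le 6C_g\sqrt{|I|/\alpha}$, and set $C_1=6C_g$. The only subtle point — and the place where $d\ge 16>15$ is used — is that passing to the last iterate inflates measures by a factor of order $d^{\,j-1}$, but this is exactly compensated by the sum over the $d^{\,j-1}$ elements of $\mathcal{P}_{j-1}$ once the uniform bounded-distortion constant $C_g$ is available; proving that distortion bound is the main technical step, though it is entirely routine given $|g'|\ge 15$ and $|g''|\le\alpha$.
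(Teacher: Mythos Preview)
Your proof is correct and is precisely the argument the paper is pointing to when it writes ``Analogous to \cite[Corollary 2.3]{Vi97}'': decompose $\mathbb{S}^1$ along $\mathcal{P}_{j-1}$, apply Lemma~\ref{v.lemma2.1} to get an admissible curve on each piece, invoke Lemma~\ref{l.lemma2.2} for the last iterate, and pull back via the uniform bounded-distortion of $g$. The constants and the distortion estimate are handled cleanly; nothing is missing.
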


\begin{proof}
Analogous to \cite[Corollary 2.3]{Vi97}.
\end{proof}

For $(\theta, x) \in \mathbb{S}^1 \times \mathcal{M}$ and $j \ge 0$ we write
$(\theta_j,x_j) = \varphi^j (\theta,x)$.
We consider constants $0< \eta \le 1/3$ and $0< \kappa <1$, that will be made precise in a little while, both depending only on $h_D$.

\begin{lemma}
\label{v.lemma2.4}
There are $\delta_1 > 0$ and $\sigma_1>1$ such that
\begin{itemize}
\item[a)] for every $\alpha > 0$ small, there exists $N = N(\alpha) \ge 1$ such that
$$
\prod_{j=0}^{N-1} \abs{\partial_x f(\theta_j,x_j)} \ge \abs{x-\tilde{x}}^{D-1} \alpha^{-1 + \frac{\eta}{D-1}},
$$
whenever $\abs{x-\tilde{x}} < 2 \Dalpha$;
\item[b)] for each $(\theta,x) \in \mathbb{S}^1 \times \mathcal{M}$ with $\Dalpha \le \abs{x - \tilde{x}} < \delta_1$,
there exists $p(x) \le N$ such that
$$
\prod_{j=0}^{p(x)-1} \abs{\partial_x f(\theta_j,x_j)} \ge \frac{1}{\kappa} \sigma_1^{p(x)}.
$$
\end{itemize}
\end{lemma}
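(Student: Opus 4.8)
I would follow the argument of \cite[Lemma 2.4]{Vi97}, transcribing the quadratic estimates to the order-$D$ critical point. Write $v=h_D(\tilde x)$ for the critical value and $y_j=h_D^{\,j-1}(v)$, $j\ge 1$, for the post-critical orbit. Two features of $h_D$ are used. First, since $\tilde x$ is pre-periodic --- in the odd case pre-fixed to the repelling fixed point $0$, near which $h_{2D+1}'\equiv 2$ --- and $I',I''$ are small, the orbit $(y_j)$ stays bounded away from $\tilde x$ and $|(h_D^{\,j})'(v)|\ge \text{const}\cdot\lambda^{\,j}$ for some $\lambda>1$. Second, on $\{\dist(x,\tilde x)<\delta_0\}\subset I''$, where $|h_D'|<1$, one has $h_D'(x)=\pm DA\,(x-\tilde x)^{D-1}$ with $DA>1$. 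From $|\varphi-\varphi_{\alpha,D}|_{C^D}\le\alpha$ and $\partial_x f(\theta,\tilde x)=0$ I would extract the uniform bound
$$
|\partial_x f(\theta,x)|\ \ge\ c_0\,\dist(x,\tilde x)^{D-1}\qquad\text{for }\dist(x,\tilde x)<\delta_0,
$$
with $c_0>0$ independent of $\theta$ and $\alpha$; this replaces Viana's linear bound $|\partial_x f|\ge\text{const}\cdot|x-\tilde x|$. Finally I would use the shadowing estimate: if $|x_1-v|\le\varepsilon$ then $\dist(x_j,y_j)\le\text{const}\cdot\lambda^{\,j}(\varepsilon+\alpha)$, so the orbit stays near $(y_j)$ --- and $\prod_{j=1}^{p}|\partial_x f(\theta_j,x_j)|\ge\text{const}\cdot\lambda^{p}$ --- for all $p\le p_*$ with $\lambda^{p_*}(\varepsilon+\alpha)\asymp\delta_0$, the $\alpha$-errors costing only a factor $1+o(1)$ since $p_*\alpha\to 0$.

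\textbf{Part a).} For $\dist(x,\tilde x)<2\Dalpha$ the $j=0$ factor is $\ge c_0\dist(x,\tilde x)^{D-1}$, and $x_1=f(\theta,x)$ obeys $|x_1-v|\le|h_D(x)-v|+\alpha\le\text{const}\cdot(2\Dalpha)^D+\alpha\le\text{const}\cdot\alpha$, so shadowing holds up to $p_*(\alpha)=\tfrac{\log(1/\alpha)}{\log\lambda}+O(1)$. I would take $N=N(\alpha)$ to be the least integer with $\text{const}\cdot\lambda^{N-1}\ge\alpha^{-1+\eta/(D-1)}$, i.e. $N(\alpha)=\bigl(1-\tfrac{\eta}{D-1}\bigr)\tfrac{\log(1/\alpha)}{\log\lambda}+O(1)<p_*(\alpha)$ for small $\alpha$ (because $1-\tfrac{\eta}{D-1}<1$); then every such point is still shadowing at time $N$ and
$$
\prod_{j=0}^{N-1}|\partial_x f(\theta_j,x_j)|\ \ge\ c_0\,\dist(x,\tilde x)^{D-1}\cdot\text{const}\cdot\lambda^{\,N-1}\ \ge\ \dist(x,\tilde x)^{D-1}\,\alpha^{-1+\eta/(D-1)}.
$$

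\textbf{Part b).} For $\Dalpha\le r:=\dist(x,\tilde x)<\delta_1$ (with $\delta_1\le\delta_0$ to be fixed) we have $|x_1-v|\le Cr^D$; I would set $p(x):=\min\{p_{\mathrm{bind}}(x),N\}\le N$, $p_{\mathrm{bind}}(x)$ being the natural end of shadowing, so that $\prod_{j=0}^{p(x)-1}|\partial_x f|\ge\text{const}\cdot r^{D-1}\lambda^{\,p(x)-1}$. If $p(x)=p_{\mathrm{bind}}(x)<N$ then, for small $\alpha$, necessarily $r^D\ge K\alpha$ (else $|x_1-v|\le CK\alpha$ forces $p_{\mathrm{bind}}(x)>N$), hence $|x_1-v|\asymp r^D$, $\lambda^{\,p(x)}\asymp r^{-D}$, so the product is $\gtrsim r^{-1}$ while $\sigma_1^{\,p(x)}\asymp r^{-D\log\sigma_1/\log\lambda}$; thus $\prod\ge\tfrac1\kappa\sigma_1^{\,p(x)}$ holds for all $r<\delta_1$ once $\sigma_1<\lambda^{1/D}$ and $\delta_1$ is small enough (for a suitable $\kappa\in(0,1)$, which may be kept bounded away from $0$). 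If $p(x)=N$ then shadowing is valid for all $1\le j\le N-1$, so $\prod_{j=0}^{N-1}|\partial_x f|\ge\text{const}\cdot r^{D-1}\lambda^{\,N-1}\ge\text{const}\cdot\alpha^{-1/D+\eta/(D-1)}$ (using $r\ge\Dalpha$ and the choice of $N$), whereas $\tfrac1\kappa\sigma_1^{\,N}\asymp\tfrac1\kappa\,\alpha^{-(1-\eta/(D-1))\log\sigma_1/\log\lambda}$; the former dominates for small $\alpha$ precisely when $\tfrac{\log\sigma_1}{\log\lambda}\le\theta:=\tfrac{1/D-\eta/(D-1)}{1-\eta/(D-1)}$, and $\theta>0$ since $\eta\le\tfrac13<\tfrac{D-1}{D}$. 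It therefore suffices to fix $\sigma_1\in(1,\lambda^{\theta})$ (note $\lambda^{\theta}<\lambda^{1/D}$, covering both conditions), then $\delta_1\le\delta_0$ small, then $\kappa\in(0,1)$; all depend only on $h_D$ through $\lambda,\delta_0,\eta$.

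\textbf{Main obstacle.} The one point that is genuinely new relative to \cite{Vi97} is the uniform estimate $|\partial_x f(\theta,x)|\ge c_0\dist(x,\tilde x)^{D-1}$: a $C^D$-perturbation of $\varphi_{\alpha,D}$ need not preserve the order of the critical point, so $\partial_x f(\theta,\cdot)$ may vanish at $\tilde x$ to some order $k_0(\theta)<D-1$. The $C^D$-closeness still gives $|\partial_x^{\,j}f(\theta,\tilde x)|\le\text{const}\cdot\alpha$ for $1\le j\le D-1$ and $|\partial_x^{\,D}f|$ bounded below near $\tilde x$; Taylor-expanding at $\tilde x$ (the $j=0$ term drops because $\partial_x f(\theta,\tilde x)=0$) gives the bound for $\dist(x,\tilde x)\gtrsim\alpha^{1/(D-2)}$, and for smaller $\dist(x,\tilde x)$ one closes it using that $\partial_x f$ vanishes only on $\{x=\tilde x\}$ together with uniform continuity of $\partial_x^{\,D-1}f$. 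With this in hand, the remaining difficulty is the bookkeeping above --- in particular that the single $N(\alpha)$ of a) is short enough to keep all deep points shadowing yet long enough for the capped case of b); the margin there is the gap $\tfrac{\eta}{D-1}\log(1/\alpha)$, positive for small $\alpha$. Everything else is the order-$D$ transcription of Viana's estimates, with $\sqrt\alpha$, $\alpha^{-1+\eta}$, $|x-\tilde x|$ replaced by $\Dalpha$, $\alpha^{-1+\eta/(D-1)}$, $\dist(x,\tilde x)^{D-1}$.
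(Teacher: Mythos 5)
Your argument is essentially the paper's: both shadow the post-critical orbit near the repelling (pre-)fixed point $\tilde{q}$, take $N$ and $p(x)$ to be (comparable to) the escape times of $\rho_2^{i}(C\alpha + C|x-\tilde{x}|^{D})$ from a fixed small neighborhood, extract the factor $|x-\tilde{x}|^{D-1}$ from the first one or two iterates, and then choose $\sigma_1$ as a sufficiently small power of the multiplier (the paper takes $\sigma_1=\rho_2^{1/(D+1)}$, you require $\sigma_1<\lambda^{1/D}$ plus a condition involving $\eta$) so that $|x-\tilde{x}|^{D-1}\rho_1^{\tilde{p}(x)}$ dominates $\frac{1}{\kappa}\sigma_1^{p(x)}$, with $\kappa$ absorbed because $p(x)\gg 1$ when $\delta_1\ll\delta_0$. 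The one place you go beyond the paper is in flagging the lower bound $|\partial_x f(\theta,x)|\ge c_0\,|x-\tilde{x}|^{D-1}$ for a general $C^D$-perturbation as a genuine issue for $D\ge 3$; the paper simply writes the corresponding factor $\frac{1}{C}|x-\tilde{x}|^{D-1}$ without comment, so your observation is a legitimate (and unaddressed-in-the-paper) refinement rather than a divergence of method.
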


\begin{proof} 
The proof follows that of \cite[Lemma 2.4]{Vi97} and we also denote
$C$ any large constant depending only on the map $h_D$.

If $D$ is odd (respectively, even), we consider an interval $J$ in $\mathbb{S}^1$ (respectively, in $I_0$) centered at $0$ (respectively, at the negative fixed point $q$ of $h_D$).
We have $h(I'') \subset J$ (respectively, $h_D^2(I'') \subset J$), if $I''$ is sufficiently small. 

First, we consider $x \in I''$ with $|x - \tilde{x}| \ge \halpha$, let $k_0 = k_0(y)$ (uniformly bounded) 
be the smallest positive integer $j$ such that $h_D^j(y)$ escape from $J$.
Recall that for $D$ odd we have $\tilde{x} = 1/2$ (respectively, for $D$ even we have $\tilde{x} = 0$).
Note that $h_{D}(1/2) = 0$ (respectively, $h_D^2(0) = q$) 
is the fixed point of $h_{D}$ and $h_{D}'(0) = \rho = 2$ (respectively, $h_D' (q) = \rho \le 4$).
We write $\tilde{q} = 0$ for $D$ odd (respectively, $\tilde{q} = q$ for $D$ even).

We fix $\rho_1 < \rho < \rho_2$ with $\rho_1 > \rho_2^{1-\eta/D}$.
Let $\delta_0 = |J|/2$.
We take $J$ sufficiently small such that  for every $|y - \tilde{q}| < \delta_0$, we
have
$$
\rho_1 < |\partial_x f(\varphi (\tau, y))| < \rho_2.
$$

Given $(\theta,x) \in \mathbb{S}^1 \times \mathcal{M}$ we denote $d_i =
|x_{\ell+i} - \tilde{q}|$, $i \ge 0$, $\ell = 1$ if $D$ is odd (respectively, $\ell = 2$ if $D$ is even).
We suppose $\delta_1 =  |I''|/2 > 0$ and $\alpha$ small enough so that $|x - \tilde{x}|< \delta_1$
implies
$$
d_0 \le C |x -\tilde{x}|^D + C\alpha < \delta_0.
$$
Now, let $(\theta,x)$ and $i \ge 1$ be such that $|x - \tilde{x}|< \delta_1$ and
$d_0, \dots , d_{i-1} < \delta_0$.
Thus, $d_i \le \rho_2 d_{i-1} + C \alpha$ and by induction
\begin{equation}
\label{eq.5}
d_i \le \rho_2^i d_0 + C \alpha (1 + \rho_2+ \dots + \rho_2^{i-1}) \le
\rho_2^i (C \alpha + C |x - \tilde{x}|^D).
\end{equation}
Suppose first that $|x-\tilde{x}| < 2\Dalpha$. 
Then,  $|x-\tilde{x}|^D <  2^D\alpha$ and from \eqref{eq.5}, we get $\displaystyle d_i \le \rho_2^i C \alpha$.

Let $\tilde{N} = \tilde{N}(\alpha) \ge 1$ be the minimum integer such that $\rho_2^{\tilde{N}}
C \alpha \ge \delta_0$ and then define $N= \ell +\tilde{N}$.
Hence, $d_i = |x_{\ell+i} - \tilde{q}| < \delta_0$ for all $0 \le i \le N-1$ and
\begin{align*}
\prod_{j=0}^{N-1} |\partial_x f(\theta_j,x_j)| & = \prod_{j=0}^{\ell-1} |\partial_x f(\theta_j,x_j)|
\cdot \prod_{j=0}^{\tilde{N}-1} |\partial_x f(\theta_{\ell + j},x_{\ell + j}| \ge \frac{1}{C}
|x - \tilde{x}|^{D-1} \rho_1^{\tilde{N}} \\
& \ge \frac{1}{C} |x - \tilde{x}|^{D-1} \rho_2^{(1 - \eta/D) \tilde{N}} \ge
\frac{1}{C} |x - \tilde{x}|^{D-1} \alpha^{-1 + \eta/D} \\
& \ge |x - \tilde{x}|^{D-1} \alpha^{-1 + \eta/(D-1)}.
\end{align*}
Part a) is proved.

Suppose now $|x-\tilde{x}| \ge \Dalpha$. 
Then \eqref{eq.5} gives $d_i \le \rho_2^i C |x-\tilde{x}|^D $.
Let $\tilde{p}(x)$ be the minimum integer such that $\rho_2^{\tilde{p}(x)} C |x - \tilde{x}|^D
\ge \delta_0$.
Define $p(x) = \ell + \tilde{p}(x)$.
Then, as before,
\begin{align*}
\prod_{j=0}^{p(x)-1} |\partial_x f(\theta_j,x_j)| & \ge  \frac{1}{C}
|x - \tilde{x}|^{D-1} \rho_1^{\tilde{p}(x)} \ge \frac{1}{C}
\left( \frac{\rho_1}{\rho_2^{(D-1)/D}}\right)^{\tilde{p}(x)} \\
& \ge \frac{1}{C} \left(
\frac{\rho_2^{1 - \eta/D}}{\rho_2^{(D-1)/D}}\right)^{\tilde{p}(x)}
\ge \frac{1}{C} \rho_2^{(1/D - \eta/D)\tilde{p}(x)} \\
& \ge \frac{1}{\kappa} \rho_2^{p(x)/(D+1)}.
\end{align*}
The last inequality follows from the fact that $p(x) \gg 1$ uniformly as
long as $\delta_1 \ll \delta_0$.
So, if we take $\sigma_1 = \rho_2^{1/(D+1)}$ the result follows.
The proof is complete.
\end{proof}

\begin{lemma}
\label{l.lemma2.5}
There exist $\sigma_2 > 1$ and $C_2> 0$ such that
$$
\prod_{j=0}^{k-1} \abs{\partial_x f(\theta_j,x_j)} \ge C_2 \sqrt[D]{\alpha^{D-1}}  \; \sigma_2^k,
$$
for every $(\theta,x)$ with $\abs{x_0-\tilde{x}}, \abs{x_1-\tilde{x}} , \dots ,
\abs{x_{k-1}-\tilde{x}} \ge \Dalpha$.
If in addition $\abs{x_k-\tilde{x}} < \delta_1$, then
$$
\prod_{j=0}^{k-1} \abs{\partial_x f(\theta_j,x_j)} \ge C_2 \sigma_2^k.
$$
\end{lemma}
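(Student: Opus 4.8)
The plan is to follow \cite[Lemma 2.5]{Vi97}: decompose $\{0,1,\dots,k-1\}$ into blocks on each of which the product $\prod_j |\partial_x f(\theta_j,x_j)|$ grows at a definite exponential rate. First I would fix $\sigma_2>1$ with $\sigma_2\le\sigma_1$, $\sigma_2\le\rho_1$ and $\sigma_2\le 3/2$, where $\sigma_1$ is the constant of Lemma~\ref{v.lemma2.4} and $\rho_1<\rho$ is the lower bound for $|\partial_x f|$ on the neighbourhood $J$ of $\tilde q$ fixed in its proof. The two building blocks are: (i) if $|x_j-\tilde x|\ge\delta_1=|I''|/2$ then $x_j\notin\interior I''$, hence $|\partial_x f(\theta_j,x_j)|\ge 7/4-\alpha\ge\sigma_2$; (ii) if $\Dalpha\le|x_j-\tilde x|<\delta_1$, then Lemma~\ref{v.lemma2.4}(b) applied at $(\theta_j,x_j)$ yields $p=p(x_j)$ with $\prod_{i=0}^{p-1}|\partial_x f(\theta_{j+i},x_{j+i})|\ge\kappa^{-1}\sigma_1^{p}\ge\sigma_2^{p}$.

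I would then form blocks greedily starting from time $0$. Since the hypothesis gives $|x_j-\tilde x|\ge\Dalpha$ for all $j<k$, the point at the start of each block is either ``escaped'' (use (i), block length $1$) or falls under (ii) (block length $p(x_j)$). Continue until a block reaches or exceeds $k$. If the blocks tile $\{0,\dots,k-1\}$ exactly, multiplying the per-block estimates gives $\prod_{j=0}^{k-1}|\partial_x f(\theta_j,x_j)|\ge\sigma_2^{k}$, which dominates both asserted bounds once $C_2\le 1$. Otherwise the last block overshoots; escaped blocks have length $1$, so it must be a type (ii) block $[j_s,j_s+p)$ with $j_s\le k-1<j_s+p$, and the complete blocks already contribute $\ge\sigma_2^{j_s}$.

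To handle the truncated final block I would reopen the proof of Lemma~\ref{v.lemma2.4}: with $m=k-j_s$ and $\ell\in\{1,2\}$, that argument shows $\prod_{i=0}^{\ell-1}|\partial_x f(\theta_{j_s+i},x_{j_s+i})|\ge C^{-1}|x_{j_s}-\tilde x|^{D-1}$ (the critical factor) and $|\partial_x f(\theta_{j_s+i},x_{j_s+i})|\ge\rho_1$ for $\ell\le i\le p-1$ (those orbit points lie in $J$). Using $|x_{j_s}-\tilde x|\ge\Dalpha$, so $|x_{j_s}-\tilde x|^{D-1}\ge\sqrt[D]{\alpha^{D-1}}$, this yields
$$
\prod_{j=j_s}^{k-1}|\partial_x f(\theta_j,x_j)| \ \ge\ C^{-1}\,\sqrt[D]{\alpha^{D-1}}\;\rho_1^{\max\{m-\ell,\,0\}} \ \ge\ C_2\,\sqrt[D]{\alpha^{D-1}}\;\sigma_2^{m}
$$
for a suitable $C_2\le 1$ (absorbing $\rho_1^{-\ell}$ with $\ell\le 2$, and the case $m\le\ell$); combined with the prefix factor $\sigma_2^{j_s}$ this is the first inequality. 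For the second (``if in addition'') inequality I would show that $|x_k-\tilde x|<\delta_1$ forces the exact-tiling case: in the truncated-block scenario $x_k$ equals, up to an error $O(\alpha)$, either a point of $J$ (so within $\delta_0$ of $\tilde q$) or $h_D(x_{j_s})\approx h_D(\tilde x)$ — which is $\tilde q=0$ for $D$ odd and $\approx a_0$ for $D$ even — and having chosen $\delta_0,\delta_1$ small enough that the $\delta_1$-neighbourhood of $\tilde x$ is at definite distance both from $\tilde q$ and (for $D$ even) from $a_0$, one gets $|x_k-\tilde x|\ge\delta_1$, a contradiction; hence $\prod_{j=0}^{k-1}|\partial_x f(\theta_j,x_j)|\ge\sigma_2^{k}\ge C_2\sigma_2^{k}$.

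The step I expect to be the real obstacle is the truncated-block estimate: it forces us to use the internal mechanics of the proof of Lemma~\ref{v.lemma2.4} rather than its statement, to pin down that the sole source of the $\sqrt[D]{\alpha^{D-1}}$ loss is the single critical derivative $|h_D'(x_{j_s})|\sim|x_{j_s}-\tilde x|^{D-1}$ (worst at $|x_{j_s}-\tilde x|=\Dalpha$), and to fix $\delta_0,\delta_1$ once and for all so that the dichotomy ruling out ``$|x_k-\tilde x|<\delta_1$ in the middle of a binding period'' works uniformly for both parities of $D$.
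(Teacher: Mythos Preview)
Your proposal is correct and follows essentially the same block-decomposition strategy as the paper: mark the close-approach times $j_1<\dots<j_s$ in $\{0,\dots,k-1\}$ (those with $|x_j-\tilde x|<\delta_1$), apply Lemma~\ref{v.lemma2.4}(b) on each $[j_i,j_i+p_i)$, and use the uniform bound $|\partial_x f|\ge\sigma_0$ on the complementary stretches, then take $\sigma_2=\min\{\sigma_0,\sigma_1\}$.

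Where you diverge is in the handling of the final block, and there the paper's route is noticeably simpler than what you outline. You propose to reopen the proof of Lemma~\ref{v.lemma2.4} to extract $|\partial_x f|\ge\rho_1$ during the binding period and to rule out $|x_k-\tilde x|<\delta_1$ by a dynamical dichotomy on the location of $x_k$. The paper avoids both detours by exploiting the very \emph{definition} of $j_s$: since $j_s$ is the last index in $\{0,\dots,k-1\}$ with $|x_j-\tilde x|<\delta_1$, every $x_{j_s+1},\dots,x_{k-1}$ lies outside $I''$, so \eqref{v.eq6} applies directly and
\[
\prod_{j=j_s}^{k-1}|\partial_x f(\theta_j,x_j)|\ \ge\ (DA-\alpha)\,|x_{j_s}-\tilde x|^{D-1}\,\sigma_0^{\,k-j_s-1}\ \ge\ C_2\,\sqrt[D]{\alpha^{D-1}}\,\sigma_0^{\,k-j_s-1}.
\]
For the ``in addition'' clause the paper simply sets $j_{s+1}=k$; the non-overlap $j_i+p_i<j_{i+1}$ (built into the construction of $p(x)$, since during $(j_i,j_i+p_i)$ the orbit is near $a_0$ or in $J$, hence far from $\tilde x$) then forces $j_s+p_s\le k$, so \eqref{v.eq9} and \eqref{v.eq10} hold for $i=s$ as well and the tiling is exact. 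Your arguments reach the same conclusions but with more machinery than needed.
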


\begin{proof}
We consider $D \ge 2$ and keep the notations of previous lemma. 

Let us recall that $\delta_1 = |I''|/2$. 
From the hypothesis on $h_D$,  if $\sigma_0 = 7/4$ then
$$
|(h_D^{n})'(y)| \ge \sigma_0^{n},
$$
whenever $|y - \tilde{x}| , \dots , |y_{n-1} - \tilde{x}| \ge \delta_1$.
By continuity, supposing $\alpha$ small enough and reducing $\sigma_0$ if necessary, if $(\tau, y) \in
\mathbb{S}^1 \times \mathcal{M}$ with
$|y_0 -\tilde{x}|, \cdots , |y_{n-1} - \tilde{x}| \ge \delta_1$ then
\begin{equation}
\label{v.eq6}
\prod_{j=0}^{n-1} |\partial_x f (\tau_j, y_j)| \ge \sigma_0^{n}.
\end{equation}

Let $(\theta, x)$ be as in the statement and let $j_1 < \cdots < j_s$ be
the values of $j \in \{0, \cdots, k-1\}$ for which
$|x_j - \tilde{x}| < \delta_1$.
If $s = 0$ the result follows immediately from \eqref{v.eq6}.
So, let us suppose $s >0$.
When $|x_k - \tilde{x}| < \delta_1$ we set $j_{s+1} = k$.
Denoting $p_i = p(x_{j_i})$, $i=1, \cdots, s$, by Lemma~\ref{v.lemma2.4}
we have
\begin{equation}
\label{v.eq9}
\prod_{j=j_i}^{j_i + p_i-1} |\partial_x f (\theta_j, x_j)| \ge
\frac{1}{\kappa} \sigma_1^{p_i},
\end{equation}
for all $i<s$.
Moreover, if $j_s + p_s \le k$ then \eqref{v.eq9} also holds for $i=s$,
this is 
the case if $|x_k - \tilde{x}| < \delta_1$, as the definition of $p(x)$
implies $j_i + p_i < j_{i+1}$.
It follows from \eqref{v.eq6} that
\begin{equation}
\label{v.eq10}
\prod_{j=0}^{j_i} |\partial_x f (\theta_j, x_j)| \ge \sigma_0^{j_1} \quad
\text{ and }
\prod_{j=j_i + p_i}^{j_{i+1}-1} |\partial_x f (\theta_j, x_j)| \ge
\sigma_0^{j_{i+1} - j_i - p_i},
\end{equation}
for all $i<s$ and, again, the second inequality remains valid for $i = s$
when $|x_k - \tilde{x}| < \delta_1$.
Now, we take $\sigma_2 = \min \{ \sigma_0, \sigma_1 \}$ and we get
$$
\prod_{j=0}^{k-1} |\partial_x f (\theta_j, x_j)| \ge \sigma_0^{j_1}
\prod_{i=1}^s \sigma_1^{p_i} \sigma_0^{j_{i+1} - j_i - p_i}\ge \sigma_2^k,
$$
whenever $|x_k - \tilde{x}| < \delta_1$. The second part of the lemma is proved.

For the first part, note that, even if \eqref{v.eq9} and \eqref{v.eq10} are not valid 
for $i=s$,  we have
$$
\prod_{j=j_s}^{k-1} |\partial_x f (\theta_j, x_j)| \ge (DA - \alpha)
|x_{j_s} - \tilde{x}|^{D-1}  \sigma_0^{k-j_s -1} \ge 
C_2 \sqrt[D]{\alpha^{D-1}} \sigma_0^{k-j_s -1},
$$
as a consequence of \eqref{v.eq6}.
The lemma is proved.
\end{proof}

Consider now $\eta = \log \sigma_2/(4\log 32) \le 1/3$.
Let $M = M(\alpha)$ be the maximum integer such that $32^M \alpha <1$.
The fact that $\rho \le \sup |h_D' | \le 4$ implies $M < N$.
For any $r \ge 0$, we denote
$$
J(r) = \{ x \in \mathbb{R} \colon |x - \tilde{x}| < \Dalpha e^{-r} \}.
$$

The next lemma is a similar result of \cite[Lemma~2.6]{Vi97} and its proof
follows closely the ideas and techniques in that paper.

\begin{lemma}
\label{v.lemma2.6}
There exist constants $C_3>0$ and $\beta>0$ such that, given an
admissible curve $\hat{Y}_0 = \graph (Y_0)$ and any
$r \ge \frac{1}{D-1} \left( \frac{1}{D} - \frac{2\eta}{D-1} \right) \log \frac{1}{\alpha}$,
$$
m\left( \{ \theta \in \mathbb{S}^1 \colon \hat{Y}_M(\theta) \in
\mathbb{S}^1 \times J(r-2) \} \right) \le C_3 e^{-5 \beta r}.
$$
\end{lemma}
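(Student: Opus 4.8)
\emph{Plan.} I would transcribe the proof of \cite[Lemma~2.6]{Vi97} essentially line by line, feeding in Lemmas~\ref{v.lemma2.4} and \ref{l.lemma2.5} (already adapted to the order‑$D$ critical point) wherever Viana invokes their quadratic counterparts; the numbers $\eta,\sigma_2,M$ and the lower bound imposed on $r$ are arranged precisely so that the accounting below closes, and $\beta$ will be any sufficiently small positive constant. Write $(\theta_j,x_j)=\varphi^j(\theta,Y_0(\theta))$ and, for $\omega\in\PP_j$, $\hat Y_j|_\omega=\graph(Y_j^\omega)$, an admissible curve over $\mathbb S^1$ by Lemma~\ref{v.lemma2.1}; set $E=\{\theta:\hat Y_M(\theta)\in\mathbb S^1\times J(r-2)\}$. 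Three preliminary reductions. First, since $\bigl||g'|-d\bigr|\le\alpha$ and $M\alpha\to0$, for every $j\le M$ one has $\sum_{\omega\in\PP_j}\bigl(\inf_\omega|(g^j)'|\bigr)^{-1}\le 2$; this lets one pass from a bound holding uniformly over the pieces of $\PP_j$ to a single bound on $\mathbb S^1$ without the spurious factor $(d/15)^j$. Second, $h_D$ carries the $\Dalpha$-neighbourhood of $\tilde x$ into the $O(\alpha)$-neighbourhood of $h_D(\tilde x)$, which lies at a definite distance from $\tilde x$; hence no orbit meets $J(0)$ at two consecutive times, so $\theta\in E$ forces $|x_{M-1}(\theta)-\tilde x|\ge\Dalpha$. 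Third, from $x_M=f(\theta_{M-1},x_{M-1})$ one sees that $x_M\in J(r-2)$ only if $x_{M-1}$ lies within $C|J(r-2)|$ of one of the finitely many branches of $h_D^{-1}(\tilde x)$; because $\tilde x$ is strictly pre-periodic each such branch is a point $q^\ast$ bounded away from $\tilde x$, so $|\partial_x f(\theta_{M-1},x_{M-1})|\ge\lambda^\ast>0$ there.

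\emph{Decomposition by itinerary.} For $\theta\in E$ record the return times $0\le\nu_1<\dots<\nu_s<M$ (the $j$ with $|x_j(\theta)-\tilde x|<\delta_1$) and the depths $t_i\ge0$ ($x_{\nu_i}\in J(t_i)\setminus J(t_i+1)$ if $x_{\nu_i}\in J(0)$, and $t_i=0$ otherwise). By Lemma~\ref{v.lemma2.4} the binding period $p_i:=p(x_{\nu_i})$ following the $i$-th return satisfies $\nu_{i+1}\ge\nu_i+p_i$, and it recoups the derivative lost at that return: one has $\prod_{j=\nu_i}^{\nu_i+p_i-1}|\partial_x f(\theta_j,x_j)|\ge\tfrac1\kappa\sigma_1^{\,p_i}$ always, while for a deep return Lemma~\ref{v.lemma2.4}(a) gives the sharper $\prod_{j=\nu_i}^{\nu_i+N-1}|\partial_x f(\theta_j,x_j)|\ge c\,|x_{\nu_i}-\tilde x|^{D-1}\alpha^{-1+\frac\eta{D-1}}$ (truncated at time $M$ when $\nu_i+N>M$); on the free stretches \eqref{v.eq6} yields expansion at rate $\ge\sigma_0$. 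The combinatorial type $\vec\tau=(\nu_i,t_i)_i$ ranges over a countable set and $E=\bigsqcup_{\vec\tau}E_{\vec\tau}$.

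\emph{Estimate per type and summation.} Fix $\vec\tau$. Pull $J(r-2)$ back along the orbit from time $M$ to time $1$, controlling the distortion of the fibre maps exactly as in \cite{Vi97} but now with $|h_D'(x)|\asymp|x-\tilde x|^{D-1}$ and $|h_D''(x)|\asymp|x-\tilde x|^{D-2}$ near $\tilde x$ in place of the quadratic bounds. This exhibits $E_{\vec\tau}$ inside a set $\{\theta:x_1(\theta)\in I^\ast\}$ with $I^\ast$ a finite union of intervals of total length
$$
|I^\ast|\ \le\ C\,|J(r-2)|\cdot\Bigl(\ \prod_i\bigl(|x_{\nu_i}-\tilde x|^{D-1}\alpha^{-1+\frac\eta{D-1}}\bigr)\cdot\sigma_0^{\,(\text{free steps})}\cdot\lambda^\ast\ \Bigr)^{-1},
$$
and the chosen lower bound $r\ge\frac1{D-1}\bigl(\frac1D-\frac{2\eta}{D-1}\bigr)\log\frac1\alpha$, together with the values of $M$ and $\eta$, is exactly what forces $|I^\ast|\le\alpha$, so Corollary~\ref{c.corollary2.3} applies at time $1$ and gives $m(E_{\vec\tau})\le C_1\sqrt{|I^\ast|/\alpha}$. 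Each additional return of depth $t$ costs a factor $|x_\nu-\tilde x|^{D-1}\approx\alpha^{(D-1)/D}e^{-(D-1)t}$, which for depths below the relevant threshold is over-compensated by the binding factor $\alpha^{-1+\eta/(D-1)}$ and the free expansion; hence the contributions of the types form a convergent series, and the factor $e^{-r/2}$ coming from $\sqrt{|J(r-2)|/\alpha}$ (combined with $r\ge r_0$ to absorb the powers of $\alpha$) yields $m(E)\le C_3 e^{-5\beta r}$ for suitable $C_3,\beta>0$.

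\emph{Where the work is.} As in \cite{Vi97}, the crux is purely the bookkeeping: one must check that the losses $|x_{\nu_i}-\tilde x|^{D-1}$ at the (possibly many) returns are recovered by the binding and free expansion \emph{uniformly in $D$}, and that with the stated value of $r$'s threshold the pulled-back interval $I^\ast$ indeed drops below the critical scale $\alpha$ — this is what pins down $r_0$, $\eta$ and $\beta$. The only genuinely new point relative to \cite{Vi97} is re-deriving the bounded-distortion estimate for $x\mapsto f(\theta,x)$ over the relevant $O(\log\frac1\alpha)$ iterates near a critical point of order $D$, using the comparisons $h_D'(x)\asymp|x-\tilde x|^{D-1}$, $h_D''(x)\asymp|x-\tilde x|^{D-2}$ recorded in the proof of Lemma~\ref{v.lemma2.1}; everything else is a transcription of Viana's argument.
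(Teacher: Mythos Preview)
Your proposal diverges from the paper's (and Viana's) proof in a way that leaves a real gap. The paper does \emph{not} decompose by return times and depths to the critical region: it first shows that for $0\le j\le M-1$ one has $|Y_j(\theta)-\tilde x|>\Dalpha$ for \emph{every} $\theta$ (equation~\eqref{eq.out}), so there are no ``deep returns'' to track and Lemma~\ref{v.lemma2.4} plays no role here. From this and the oscillation bound $\osc(Y_j)\le 2\alpha\cdot 4^j$ the paper derives bounded distortion of $\partial_x f^i$ on each $\hat Y_j$ uniformly for $j+i\le M$, sets $\lambda_j=|\partial_x f^{M-j}(\varphi^j(\hat y))|$, and uses Lemma~\ref{l.lemma2.5} to get $\lambda_j\ge C_2\sigma_2^{M-j}$.

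The mechanism that actually produces $e^{-5\beta r}$ is combinatorial and rests on Lemma~\ref{v.lemma2.7}, which you never invoke. The paper splits the range of $r$: for $r>(\tfrac{D-1}{D}+2\eta)\log\tfrac1\alpha$, Corollary~\ref{c.corollary2.3} applied directly at time $M$ already suffices. For $r$ down to the threshold in the statement, one labels each $\omega\in\PP_M$ by its base itinerary $\tilde l=(l_1,\dots,l_M)\in\{1,\dots,d\}^M$, introduces ``splitting times'' $1=t_1<t_2<\cdots$ determined by the decay of the $\lambda_j$'s (namely $\lambda_{t_i}\ge 2K\lambda_{t_{i+1}}$), and uses Lemma~\ref{v.lemma2.7} to show that at each $t_i$ a $[d/16]/d$ fraction of the itinerary choices become pairwise \emph{incompatible}: their $\hat Y_M$-curves are vertically separated by more than the width of $J(r-2)$. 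Since the number $k$ of splitting times satisfies $k\ge\gamma_1 r$, at most a $((d-[d/16])/d)^{\gamma_1 r}$ fraction of the $d^M$ curves $\hat Y_M(\tilde l)$ can meet any fixed segment $\{\theta\}\times J(r-2)$, and this is the source of the exponential decay.

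Your ``pull back $J(r-2)$ to time $1$ and apply Corollary~\ref{c.corollary2.3}'' strategy cannot replace this step. The fibre dynamics depends on $\theta$, so the pulled-back interval $I^\ast$ is different on each piece of $\PP_M$; there is no single $\theta$-independent $I^\ast$ to which Corollary~\ref{c.corollary2.3} applies. If you sum the per-piece estimates over the $d^M$ pieces the bound blows up, and it is precisely the incompatibility counting via Lemma~\ref{v.lemma2.7} that controls how many pieces can actually contribute. That lemma --- the separation of at least $[d/16]$ of the $d$ branches by $\alpha/100$ at each step --- is the missing idea in your outline.
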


The proof relies on the following lemma.

\begin{lemma}
\label{v.lemma2.7}
There are $H_1, H_2 \subset \{1, \dots , d \}$ with $\#H_1, \#H_2
\ge d/16]$ such that $|Z_{j_1} - Z_{j_2} | \ge \alpha/100$ for all
$\theta \in \mathbb{S}^1$, $j_1 \in H_1$, and $j_2 \in H_2$.
\end{lemma}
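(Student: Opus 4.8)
The plan is to follow \cite[Lemma~2.7]{Vi97}: the statement rests only on the almost uniform spreading of the $d$ preimages of a point around $\mathbb{S}^1$ together with the fact that $\theta\mapsto\sin2\pi\theta$ is monotone, with derivative bounded away from $0$, on two arcs that cover $\mathbb{S}^1$ up to a set of definite measure. Fix $\theta\in\mathbb{S}^1$; since each $g|_{\omega_j}$, with $\omega_j:=[\tilde\theta_{j-1},\tilde\theta_j)$, is a bijection onto $\mathbb{S}^1$, there is exactly one preimage $\theta^{(j)}\in\omega_j$ of $\theta$, and $Z_j(\theta)=f(\theta^{(j)},X(\theta^{(j)}))$ for the admissible curve $\hat X=\graph X$ in play. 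First I would split
\[
Z_{j_1}(\theta)-Z_{j_2}(\theta)=\alpha\bigl(\sin2\pi\theta^{(j_1)}-\sin2\pi\theta^{(j_2)}\bigr)+E(\theta;j_1,j_2),
\]
where $E$ collects the contribution of $h_D\circ X$ at the two preimages and that of $\varphi-\varphi_{\alpha,D}$, and check, using $|X'|\le\alpha$, $|X''|\le\alpha$ and $|\varphi-\varphi_{\alpha,D}|_{C^D}\le\alpha$, that $|E|$ is of lower order (in particular below $\tfrac{99}{100}\alpha$ for $\alpha$ small), uniformly in $\theta,j_1,j_2$. It then suffices to exhibit $H_1,H_2\subset\{1,\dots,d\}$, independent of $\theta$, with $\#H_i\ge d/16$ and $\sin2\pi\theta^{(j_1)}-\sin2\pi\theta^{(j_2)}\ge1$ for all $\theta\in\mathbb{S}^1$, $j_1\in H_1$, $j_2\in H_2$.

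Next I would record the geometry of $\mathcal{P}_1$. As $\varphi$ is $\alpha$-close to $\varphi_{\alpha,D}$ one has $g'\ge d-\alpha$, so every $|\omega_j|\le(d-\alpha)^{-1}\le(1+c\alpha)/d$ for a suitable constant $c$; consequently any arc $L\subset\mathbb{S}^1$ contains at least $|L|d/(1+c\alpha)-2$ of the intervals $\omega_j$. I would apply this to the two disjoint arcs of length $1/3$,
\[
L_1=\bigl[\tfrac1{12},\tfrac5{12}\bigr],\qquad L_2=\bigl[\tfrac7{12},\tfrac{11}{12}\bigr],
\]
obtaining — since $d\ge16$ — at least $d/16$ of the $\omega_j$'s inside each $L_i$.

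Then I would set $H_1=\{\,j:\omega_j\subset L_1\,\}$ and $H_2=\{\,j:\omega_j\subset L_2\,\}$; these do not depend on $\theta$ and have cardinality at least $d/16$ by the previous step. If $j_1\in H_1$ then $\theta^{(j_1)}\in L_1$, whence $\sin2\pi\theta^{(j_1)}\ge\sin\tfrac\pi6=\tfrac12$; if $j_2\in H_2$ then $\theta^{(j_2)}\in L_2$, whence $\sin2\pi\theta^{(j_2)}\le-\tfrac12$; so $\sin2\pi\theta^{(j_1)}-\sin2\pi\theta^{(j_2)}\ge1$ for every $\theta$, $j_1\in H_1$, $j_2\in H_2$. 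Combined with the first display this gives $|Z_{j_1}(\theta)-Z_{j_2}(\theta)|\ge\alpha-|E|\ge\alpha/100$, as asserted.

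The delicate point is the reduction in the first step — showing that $E$, which contains $h_D(X(\theta^{(j_1)}))-h_D(X(\theta^{(j_2)}))$ and the difference of $\varphi-\varphi_{\alpha,D}$ at the two base points, genuinely stays below the $\alpha$-sized gap furnished by the $\sin$ term; this is where the admissibility bounds on $\hat X$ and the $C^D$-smallness of the perturbation enter quantitatively. Should the crude global bound on $E$ not suffice, I would instead place $H_1$ and $H_2$ inside a single monotonicity arc of $\sin2\pi(\cdot)$, separated by a fixed gap, so that the two preimages are close and $X$ varies by only $O(\alpha\,|\theta^{(j_1)}-\theta^{(j_2)}|)$ between them, at the (harmless, for $d\ge16$) cost of a smaller explicit lower bound for $|Z_{j_1}-Z_{j_2}|$. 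The counting of partition elements in $L_1,L_2$ and the use of the monotonicity of $\sin$ are elementary.
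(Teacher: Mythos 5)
Your overall skeleton (write $Z_j(\theta)=f(\theta^{(j)},X(\theta^{(j)}))$, isolate the $\alpha\sin2\pi(\cdot)$ term, choose $H_1,H_2$ by the location of the preimages) is the right one, but the step you yourself flag as delicate is exactly where the argument breaks, and the fallback you offer does not repair it. The claim $|E|\le\frac{99}{100}\alpha$ is false for your choice $L_1=[\frac1{12},\frac5{12}]$, $L_2=[\frac7{12},\frac{11}{12}]$: the term $h_D(X(\theta^{(j_1)}))-h_D(X(\theta^{(j_2)}))$ is only controlled by $\sup|h_D'|\cdot|X(\theta^{(j_1)})-X(\theta^{(j_2)})|\le \sup|h_D'|\cdot\alpha\cdot\dist(\theta^{(j_1)},\theta^{(j_2)})$, and in the even case $\sup|h_D'|$ is close to $4$ on $I_0$ while the circle distance between points of $L_1$ and $L_2$ can be $\tfrac12$; so $|E|$ can reach roughly $2\alpha$, which is as large as the \emph{maximal} possible gain $\alpha|\sin2\pi u-\sin2\pi v|\le2\alpha$ and far exceeds the gain $\alpha$ guaranteed by your arcs. (Concretely, an admissible curve with $X'\equiv\alpha$ taking values near the endpoint of $I_0$ where $|h_D'|\approx4$ essentially cancels the sine term for nearly antipodal preimages.) The competition is between $2\pi\alpha|\cos2\pi\xi|\cdot\dist$ and $(\sup|h_D'|+1)\alpha\cdot\dist$, so one is forced into the region where $2\pi|\cos2\pi t|>5$, i.e.\ $|t|\lesssim0.104$ (or the symmetric region about $t=\tfrac12$), of total length about $0.21$ per monotonicity branch.

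This is where your fallback also fails quantitatively for the stated range $d\ge16$: to \emph{guarantee} that an arc contains $[d/16]$ full elements of $\mathcal{P}_1$ you need its length to be at least about $([d/16]+1)/d$, which for $d=16$ is $\tfrac18$ per arc, i.e.\ $\tfrac14$ for the two arcs plus a gap --- and this does not fit inside the steep region of length $0.21$. The ingredient missing from your proposal (and present in Viana's argument, to which the paper defers) is the rigidity of the preimage configuration: $\theta^{(j_1)}-\theta^{(j_2)}=\frac{j_1-j_2}{d}+O(\frac{\alpha}{d})$ for every $\theta$, so one never needs the separation for \emph{all} pairs in a product of arcs, only along this one-parameter family. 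Writing $\sin2\pi u-\sin2\pi v=2\cos(\pi(u+v))\sin(\pi(u-v))$ with $u-v$ essentially fixed, the estimate reduces to a lower bound on $\cos$ at the midpoint, and then even two adjacent elements $\omega_{j_0},\omega_{j_0+1}$ with common endpoint nearest to $0$ work for $d=16$ (note that without rigidity adjacent elements are useless, since their closures meet). Finally, replacing $\alpha/100$ by ``a smaller explicit lower bound'' is not harmless: the constant $\alpha/100$ is used verbatim in the proof of Lemma~\ref{v.lemma2.6} together with $K=400e^{2(D-1)^2}$, so any weakening must be propagated there.
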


\begin{proof}
See \cite[Lemma~2.7]{Vi97}.
\end{proof}

\begin{proof}[Proof of Lemma~\ref{v.lemma2.6}]
Let $\hat{Y}_j(\theta) = \varphi^j (\theta, Y_0(\theta)) = (g^j(\theta),
Y_j(\theta))$.
We use $C$ to represent any large positive constant depending only on $h_D$.

As $\hat{Y}_0 = \graph (Y_0)$ is an admissible curve, we have
$\osc (Y_0) \le \alpha$ and we claim $\osc(Y_j) \le 4\osc(Y_{j-1}) + 2\alpha$, where
$\osc(Y_j) = \sup Y_j - \inf Y_j$.
Indeed, note that
\begin{align*}
\hat{Y}_j(\theta) & = \varphi^j (\theta, Y_0(\theta)) =
\varphi(\varphi^{j-1} (\theta, Y_0(\theta))) \\
& = \varphi(g^{j-1}(\theta), Y_{j-1}(\theta)) =
(g^j(\theta), f(g^{j-1}(\theta), Y_{j-1}(\theta))).
\end{align*}
Hence, $Y_j(\theta) = f(g^{j-1}(\theta), Y_{j-1}(\theta))$ and so
$$
\osc(Y_j) \le \osc(\alpha \sin 2\pi (g^{j-1}(\theta))) + 4 \osc (Y_{j-1})
+ \alpha \le 4 \osc (Y_{j-1}) + 2\alpha.
$$
Thus, $\osc(Y_j) \le 2\alpha 4^j \le 2(32^{-M} 4^j)$ and
$\displaystyle \osc (Y_{M}) < 2\alpha^{3/5} < \sqrt{\alpha}$.

If $|Y_{M} (\theta) - \tilde{x}| \ge \Dalpha$ for all $\theta$ the lemma
follows since $\{ \theta \in \mathbb{S}^1 \colon \hat{Y}_M(\theta) \in
\mathbb{S}^1 \times J((r-2)(D-1)^2) \} = \emptyset$.
So, let us suppose that for some $\tau \in \mathbb{S}^1$ we have
$|Y_M (\tau) - \tilde{x}| < \Dalpha$ and thus
\begin{equation}
\label{eq.11}
|Y_M (\theta) - \tilde{x}| < 2\Dalpha \quad (< \delta_1) \text{ for every }
\theta \in \mathbb{S}^1.
\end{equation}

Let us denote $\mathcal{O} = \{ h^i(\tilde{x}) \colon i \ge 1 \}$ and
$\delta_j(\theta) = \dist (Y_j(\theta), \mathcal{O})$.
Similar argument of \eqref{eq.5} yields
$$
\delta_{j+i}(\theta) \le C 4^i ( \alpha + |Y_j(\theta) -\tilde{x}|^D).
$$
for all $\theta$, $0 \le j \le M-1$, and $1 \le i \le M-j$.

If $|Y_{j_0}(\tau) - \tilde{x}| \le \Dalpha$ for some $\tau \in
\mathbb{S}^1$ and some $0 \le j_0 \le M-1$ then $\delta_{M}(\tau) \le C
4^{M - j_0} (\alpha + |Y_{j_0}(\tau) -\tilde{x}|^D) < C 4^{M} \alpha <
C\sqrt{\alpha}$, contradicting \eqref{eq.11}.
Hence
\begin{equation}
\label{eq.out}
|Y_{j}(\theta) - \tilde{x}| > \Dalpha \text{ for any }
\theta \in \mathbb{S}^1 \text{ and } 0 \le j \le M-1.
\end{equation}
In addition, note that the above reasoning together with \eqref{eq.11} gives
\begin{equation}
\label{eq.13}
4^{M - j} |Y_{j}(\theta) - \tilde{x}|^D \ge \frac{1}{C} \text{ for all }
\theta \in \mathbb{S}^1 \text{ and } 0 \le j \le M-1,
\end{equation}
here we are taking $C \gg 1/\dist(\tilde{x}, \mathcal{O})$.

To derive uniform bound for the distortion of $\partial_x f$ on iterates of
$\hat{Y}_0$, note that given $0\le j \le M-1$, $(\theta_j, x_j),
(\tau_j, y_j) \in \hat{Y}_j$, and $1\le i \le M-j$ we have
\begin{equation}
\label{eq.bound}
\begin{aligned}
\left| \frac{\partial_x f^i(\theta_j, x_j)}{\partial_x f^i(\tau_j, y_j)}
\right| & = \prod_{m=j}^{j+i-1} \left|\frac{\partial_x f (\theta_m ,x_m)}
{\partial_x f (\tau_m ,y_m)} \right| \\
& =  \prod_{m=j}^{j+i-1} \left| 1 + \frac{\partial_x f (\theta_m ,x_m) -
\partial_x f (\tau_m ,y_m)|} {\partial_{x} f (\tau_m ,y_m)}\right| .
\end{aligned}
\end{equation}
We have
\begin{align*}
|\partial_x f (\theta_m ,x_m) - \partial_x f (\tau_m ,y_m)| & \le 
D (D-1) A \osc Y_m \le D(D-1)A 2\alpha 4^m.
\end{align*}
On one hand, if $y_m \in \mathcal{M} \setminus I''$ then
$|\partial_x f (\tau_m ,y_m)| \ge 7/4$.
Hence, assuming $\alpha$ small enough,
$$
\frac{|\partial_x f (\theta_m ,x_m) - \partial_x f (\tau_m ,y_m)|}
{|\partial_x f (\tau_m ,y_m)|} \le C \alpha 4^m < \sqrt{\alpha}.
$$
On the other hand, if $y_m \in I''$ using \eqref{eq.13} we get
$$
|\partial_x f (\tau_m ,y_m)| = DA|y_m - \tilde{x}|^{D-1} \ge
\frac{DA}{\left(\frac{1}{C} 4^{M-m}\right)^{(D-1)/D}},
$$
which implies
$$
\frac{|\partial_x f (\theta_m ,x_m) - \partial_x f (\tau_m ,y_m)|}
{|\partial_x f (\tau_m ,y_m)|} \le C \alpha 4^{M} < \sqrt{\alpha}.
$$
Therefore, from \eqref{eq.bound}, using the fact that
$M \approx \log \frac{1}{\tilde{\alpha}}$ and assuming $\alpha$ small enough, we
have
$$
\left| \frac{\partial_x f^i(\theta_j, x_j)}{\partial_x f^i(\tau_j, y_j)}
\right| \le \prod_{m=j}^{j+i-1} (1 + \sqrt{\alpha}) \le
(1+ \sqrt{\alpha})^{2i} \le e^{2M \sqrt{\alpha}} \le 2.
$$
We just proved that given any $0\le j \le M-1$ and $1\le i \le M-j$
$$
\left| \frac{\partial_x f^i(\theta_j, x_j)}{\partial_x f^i(\tau_j, y_j)}
\right| \le 2 \text{ for every } (\theta_j, x_j),(\tau_j, y_j) \in
\hat{Y}_j.
$$

We fix an arbitrary $\hat{y} \in \hat{Y}_0$ and let $\lambda_j =
| \partial_xf^{M-j}(\varphi^j(\hat{y}))|$.
From Lemma~\ref{l.lemma2.5} (also recall \eqref{eq.out} and
\eqref{eq.11}), we have $\lambda_j \ge C_2 \sigma_2^{M-j}$ for
$0 \le j \le M-1$.
On the other hand, the previous inequality gives
\begin{equation}
\label{eq.15}
\frac{1}{2} \frac{\lambda_j}{\lambda_{i+j}} \le
|\partial_x f^i(\theta_j,x_j)| \le 2 \frac{\lambda_j}{\lambda_{i+j}} \quad
\text{ for all } (\theta_j,x_j) \in \hat{Y}_j.
\end{equation}

Now, see \cite[Proof of Proposition 5.2]{BST03}, assume 
$$
\displaystyle r >  \left( \frac{D-1}{D} + 2\eta \right) \log \frac{1}{\alpha}.
$$
Then
$$
 \frac{D-1}{D} \log \frac{1}{\alpha} < r \left(1 + \frac{2D}{D-1}\eta \right)^{-1} < r \left(1 - \frac{\eta}{D-1}\right).
$$
Thus,
$$
\frac{1}{\alpha^{\frac{D-1}{D}}} < e^{r(1-\eta/(D-1))}.
$$
From Corollary~\ref{c.corollary2.3}, for an admissible curve $\hat{X_0} = \graph X_0$, we have, for every $j \ge 1$,
\begin{align*}
m( \{\theta \in \mathbb{S}^1 \colon \hat{X_j}(\theta) \in & J(r-2) \})  \le C_1 \sqrt{\frac{\Dalpha e^{-(r-2)}}{\alpha}} \\
& =  C_1 \sqrt{\frac{e^{-(r-2)}}{\alpha^\frac{D-1}{D}}} \le e^2 C_1 e^{-\frac{\eta}{D-1}r}.
\end{align*}
Take $\beta = \eta/(5(D-1)$ in order to get the result in this case.

Now consider 
$$
\displaystyle \frac{1}{D-1} \left( \frac{1}{D} - \frac{2\eta}{D-1} \right) \log \frac{1}{\alpha} \le r \le \left( \frac{D-1}{D} + 2\eta \right) \log \frac{1}{\alpha}.
$$
It is enough to proof the lemma for $r_0 = \displaystyle \frac{1}{D-1} \left( \frac{1}{D} - \frac{2\eta}{D-1} \right) \log \frac{1}{\alpha}$.
For the other $r'$s the result follows by replacing $\beta$ by $\displaystyle \beta \frac{1/D - 2\eta/(D-1)}{(D-1)^2 (1/D + 2\eta/(D-1))}$.

We fix $K = 400 e^{2(D-1)^2}$ and consider positive integers $t_1<t_2< \dots \le M$
defined by $t_1 = 1$ and
$$
t_{i+1} = \min \{ s \colon t_i < s \le M \text{ and } \lambda_{t_i} \ge
2K \lambda_s\} \quad \text{ (if it exists)}.
$$

We set $ \displaystyle k = k(r_0) = \max \{i \colon \lambda_{t_i} \ge
2K e^{-r_0(D-1)^2} / \alpha^\frac{D-1}{D} \}$.

We claim that there is a constant $\gamma_1 > 0$ such that
$k(r_0) \ge \gamma_1 r_0$.
Indeed, we have $\lambda_{t_i} \le 2K\lambda_{t_{i+1}-1} \le
8K \lambda_{t_{i+1}}$ for all $i$ and so
\begin{equation}
\label{eq.k1}
\lambda_{t_{k+1}} \ge C_2 \sigma_2^{M-1}(8K)^{-k},
\end{equation}
and, by definition,
\begin{equation}
\label{eq.k2}
\lambda_{t_{k+1}} \le 2K e^{-r_0(D-1)^2} / \alpha^{(D-1)/D}.
\end{equation}
From \eqref{eq.k1} and \eqref{eq.k2} we obtain
$$
C_2 \sigma_2^{M-1}(8K)^{-k} \le 2K e^{-r_0(D-1)^2} / \alpha^{(D-1)/D},
$$
which implies
$$
k \log (8K) \ge r_0(D-1)^2 + M \log \sigma_2 - \frac{D-1}{D} \log \frac{1}{\alpha} + C.
$$
It follows from the definition of $\eta$ and $M$ that $\displaystyle M\log \sigma_2 \ge
4 \eta \log \frac{1}{\alpha}$.
Hence
\begin{align*}
k \log (8K) & \ge r_0 (D-1)^2 - \left(\frac{D-1}{D} - 4\eta\right) \log \frac{1}{\alpha} + C \\
& \ge r_0(D-1)^2 - \left( \frac{\frac{D-1}{D} - 4 \eta}{\frac{D-1}{D} - 2\eta} \right) (D-1)^2 r_0 + C \\
& \ge \left(1 - \frac{\frac{D-1}{D} - 4 \eta}{\frac{D-1}{D} - 2\eta} \right)(D-1)^2 r_0 + C \ge 2(D-1)^2 \eta r_0,
\end{align*}
proving the claim for $\gamma_1 = 2(D-1)^2 \eta/ \log (8K)$.

As in the proof of \cite[Lemma~2.6]{Vi97}, for each
$\tilde{l} = (l_1, \dots , l_{M}) \in \{1, \dots, d\}^{M}$ we denote by
$\omega(\tilde{l})$ the only element $\omega \in \mathcal{P}_{M}$ satisfying
$g^i(\omega) \subset [\tilde{\theta}_{l_i-1}, \tilde{\theta}_{l_i}),
i = 1, \dots , M$.
Given $1 \le j \le M$ we let $\hat{Y}_j = \graph (Y_j(\tilde{l})) =
\varphi^j(\hat{Y}_0(\omega(\tilde{l})))$.
We say that $\tilde{l}$ and $\tilde{m}$ are \emph{incompatible} if
$$
|Y_{M}(\tilde{l}, \theta) - Y_{M}(\tilde{m}, \theta)|
\ge 4 e^{(2-r)(D-1)^2} \Dalpha \text{ for all } \theta \in \mathbb{S}^1.
$$
In this case $\hat{Y}_M(\tilde{l})$ and $\hat{Y}_M(\tilde{m})$ can not both
intersect a same vertical segment $\{\theta\} \times J((r-2)(D-1)^2)$.
By Lemma~\ref{v.lemma2.7} there are $H_1', H_1'' \subset \{1, \dots, d\}$
with $\#H_1', \#H_1'' \ge [d/16]$ such that given $l_1' \in H_1'$ and
$l_1'' \in H_1''$ we have
$$
|Y_1(l_1',l_2, \dots , l_{M}, \theta) - Y_1(l_1'', l_2, \dots , l_{M}, \theta)| \ge
\frac{\alpha}{100}
$$
for all $\theta \in \mathbb{S}^1$ and $l_2, \dots , l_{M}$.
Then, by \eqref{eq.15}, the definition of $K$, and the fact that
$1 \le k(r_0)$,
\begin{align*}
|Y_M(l_1', l_2, \dots , l_M, & \theta)  - Y_M(l_1'', l_2 \dots , l_M, \theta)| \ge
\frac{\lambda_1}{2} \frac{\alpha}{100} \ge \\
& \ge \frac{K e^{-r_0(D-1)^2}}{\alpha^{\frac{D-1}{D}}} \frac{\alpha}{100} = 4e^{(2-r_0)(D-1)^2} \Dalpha,
\text{ for all } \theta \in \mathbb{S}^1,
\end{align*}
that is, $(l_1', l_2, \dots , l_{M})$ and $(l_1'', l_2, \dots , l_{M})$ are incompatible
for every $l_2, \dots , l_{M}$.
Furthermore, we claim that all pair of form $(l_1', l_2, \dots l_{t_2-1},
l_{t_2}', \dots , l_{M}')$ and $(l_1'', l_2, \dots l_{t_2-1}, l_{t_2}'',
\dots , l_{M}'')$ are incompatible.
Indeed, note that, as a consequence of \eqref{eq.15} and the definition of
$t_2$, we have
\begin{align*}
|Y_{t_2}(l_1', l_2 \dots, l_{M}, \theta) - Y_{t_2}(l_1'', l_2 \dots, l_{M}, \theta)|
\ge \frac{\lambda_1}{2\lambda_{t_2}} \frac{\alpha}{100}
\ge K \frac{\alpha}{100} = 4e^{2(D-1)^2} \alpha,
\end{align*}
for all $\theta \in \mathbb{S}^1$.

On the other hand,
\begin{align*}
|Y_{t_2}(l_1', l_2, \dots, & l_{t_2-1}, l_{t_2}', \dots , l_{M}')(\theta) -
Y_{t_2}(l_1', l_2, \dots, l_{t_2-1}, l_{t_2}, \dots, l_{M}) (\theta)| \le \\
& \le \osc (\varphi(\hat{Y}_{t_2-1}(l_1', l_2, \dots, l_{t_2-1})))
\le 8\alpha,
\end{align*}
for every $\theta \in \mathbb{S}^1$, and similarly for
$Y_{t_2}(l_1'', \dots )$.
Thus, as $\lambda_{t_2} \le k(r_0)$, we have
\begin{align*}
|Y_{M}(l_1', l_2, & \dots l_{t_2-1}, l_{t_2}', \dots, l_{M}', \theta) -
Y_{M}(l_1'', l_2, \dots l_{t_2-1}, l_{t_2}'', \dots , l_{M}'', \theta)| \ge \\
& \ge \frac{\lambda_{t_2}}{2}(4e^{2(D-1)^2} - 16) \alpha \ge \frac{K e^{-r_0(D-1)^2}}{\alpha^{\frac{D-1}{D}}} (4e^{2(D-1)^2} - 16) \alpha = \\
& = 400 e^{(2-r_0)(D-1)^2} (4e^{2(D-1)} - 16) \Dalpha \ge 4 e^{(2-r_0)(D-1)^2} \Dalpha,
\end{align*}
for all $\theta \in \mathbb{S}^1$, proving the claim.
Proceeding as above for each $t_i$, we get that each segment $\{ \theta \} \times J((r-2)(D-1)^2)$ intersects at most $d^{M - k(r_0)} \cdot (d - [d/16])^{k(r_0)}$ admissible curves $\hat{Y}_M(\tilde{l})$, see the proof of \cite[Lemma~2.6]{Vi97} for details.
The fact that $\displaystyle M \le \text{const } \log \frac{1}{\alpha}$ implies that 
$$
|(g^M)'| \ge (d- \alpha)^M \ge \text{const} d^M.
$$
Therefore,
\begin{align*}
m \left( \{ \theta  \colon \hat{Y}_M ( \theta) \in \mathbb{S}^1 \times J((r-2)(D-1)^2) \} \right) & \le \frac{d^M((d-[d/16])/d)^{k(r_0) }}{(d - \alpha)^M} \\
& \le \text{const} \left( \frac{99}{100} \right)^{\gamma_1 r_0}.
\end{align*}
The lemma follows by taking $\displaystyle \beta = \frac{\gamma_1}{5} \log \left( \frac{100}{99} \right)$.
\end{proof}

\section{Proof of Theorem~\ref{mt.general}}

To conclude the proof we continue following closely the proof of \cite{Vi97}.
We provide the details when the critical point is of order $D \ge 2$ (note that taking $D=2$, the proof is the same).
See \cite{Vi97}, for more details.

Let us recall some ingredients and notations. 
For $n \ge 1$ be sufficiently large and fixed, we define $m \ge 1$ by $m^2 \le n < (m+1)^2$ and take $l = m - M$, where
$M = M(\alpha)$ is as above.
Note that $l \approx m \approx \sqrt{n}$ as long as
$n \gg \log \frac{1}{\alpha}$.
Let $\hat{X}_0$ be an arbitrary admissible curve.
Given $1 \le \nu \le n$ and $\omega_{\nu + l} \in \mathcal{P}_{\nu + l}$,
we set $\gamma = \varphi^\nu(\hat{X}_0|\omega_{\nu + l})$.
We say that $\nu$ is 
\begin{itemize}
\item a I$_n$-\textit{situation} for $\theta \in \omega_{\nu + l}$ if $\gamma \cap (\mathbb{S}^1 \times J(0)) \neq \emptyset$ but $\gamma \cap (\mathbb{S}^1 \times J(m)) = \emptyset$;
\item  a II$_n$-\textit{situation} for $\theta \in \omega_{\nu + l}$ if $\gamma \cap (\mathbb{S}^1 \times J(m)) \neq \emptyset$.
\end{itemize}

This setting is exactly the same of \cite{Vi97}, let us remind some consequences for completeness.
It follows from Lemma~\ref{v.lemma2.1} that $\gamma$ is the graph of a function defined on $g^\nu (\omega_{\nu + l}) \in \mathcal{P}_l$ and whose derivative is bounded above by $\alpha$. 
So, the diameter in the $x$-direction is bounded by $\alpha(d-\alpha)^{-l} \ll \sqrt[D]{\alpha} \; e^{-m}$.
This means that whenever $\nu$ is a II$_n$-situation for $\omega_{\nu + l}$ then $\gamma \subset (\mathbb{S}^1 \times J(m-1))$.

Let $B_2(n) = \{ \theta \in \mathbb{S}^1 \colon \text{ some } 1 \le \nu \le n \text{ is a II}_n\text{-situation for } \theta \}$.  
From Corollary~\ref{c.corollary2.3}, we obtain
\begin{equation}
\label{eq.b2}
m(B_2(n)) \le n C_1 \sqrt{\frac{|J(m-1)|}{\alpha}} \le \text{const}\; \alpha^{-(D-1)/(2D)}ne^{-m/2} \le \text{const}\; e^{-\sqrt{n}/4}.
\end{equation}

Thus, from this point we focus on values $\theta$ having no II$_n$-situations in $[1,n]$.
Let $1 \le \nu_1 < \dots < \nu_s \le n$ be the I$_n$-situation of $\theta$.
The definition of $N$ implies $\nu_{i+1} \ge \nu_i + N$ for every $i$; in particular $(s-1)N \le n$.

For each $\nu = \nu_i$ we fix $r = r_i \in \{1, \dots, m\}$ minimum such
that $\gamma \cup (\mathbb{S}^1 \times J(r)) = \emptyset$.
Then, by Lemma~\ref{v.lemma2.4} and definition of $J(r)$ 
\begin{align*}
\prod_{\nu_i}^{\nu_i + N -1} \left|\partial_x f(\hat{X}_j(\theta))\right|
& \ge (\Dalpha e^{-r_i})^{D-1} \alpha^{-1 + \frac{\eta}{D-1}} \\ 
& = e^{-(D-1)r_i} \alpha^{\frac{D-1}{D} -1  + \frac{\eta}{D-1}} \\
& = e^{-(D-1)r_i} \alpha^{-\frac{1}{D} + \frac{\eta}{D-1}},
\end{align*}
for each $1 \le i < s$.

Lemma~\ref{l.lemma2.5} gives
$$
\prod_{1}^{\nu_1 - 1} \left|\partial_x f(\hat{X}_j(\theta))\right| \ge C_2 \sigma_2^{\nu_1-1} \quad \text{ and } \quad
\prod_{\nu_i + N}^{\nu_{i+1} - 1} \left|\partial_x f(\hat{X}_j(\theta))\right| \ge C_2 \sigma_2^{\nu_{i+1} - \nu_i - N},
$$
for every $1 \le i < s$ and also
\begin{align*}
\prod_{\nu_s}^{n} \left|\partial_x f(\hat{X}_j(\theta))\right| & \ge (DA - \alpha) |x_{\nu_s} - \tilde{x}|^{D-1} C_2 (\Dalpha^{D-1} \sigma_2^{n - \nu_s} \\
& \ge \text{const} \; (\Dalpha)^{2(D-1)} e^{-(D-1)r_s} \sigma_2^{n - \nu_s}.  
\end{align*}
Altogether this yields the following lower bound for
$\log \prod_{j=1}^n \left| \partial_x f(\hat{X}_j(\theta)) \right|$:
\begin{align*}
(n - (s-1) N) \log \sigma_2 + \sum_{i-1}^s & \left( \left( \frac{1}{D}- \frac{\eta}{D-1}
\right) \log \frac{1}{\alpha} - (D-1)r_i \right) \\
& -s\;\text{const} - \frac{2D-1}{D} \log \frac{1}{\alpha}.
\end{align*}

We consider $G = \left\{ i \colon r_i \ge \frac{1}{D-1}\left(\frac{1}{D} - \frac{2\eta}{D-1}
\right) \log \frac{1}{\alpha} \right\}$ (note that $G$ depends on $\theta$) and then
\begin{align*}
\sum_{i-1}^s \left( \left( \frac{1}{D}- \frac{\eta}{D-1}  \right)  \log \frac{1}{\alpha} - (D-1)r_i \right)  & \ge - \sum_{i \in G} (D-1) r_i + \eta s \log \frac{1}{\alpha} \\
& \ge - (D-1)  \sum_{i \in G} r_i + \gamma_2Ns,
\end{align*}
for some $\gamma_2 > 0$ independent of $\alpha$ or $n$ (because $N \approx \text{ const } \log (1/\alpha))$.
Thus, we have
\begin{align*}
\log \prod_{j=1}^n \left| \partial_x f(\hat{X}_j(\theta)) \right| & \ge (D+1) cn - (D-1) \sum_{i \in G}  r_i -s\;\text{const} - \frac{2D-1}{D} \log \frac{1}{\alpha} \\
& \ge D cn - (D-1) \sum_{i \in G}  r_i,
\end{align*}
where $c = \frac{1}{D+1} \min\{\gamma_2, \log \sigma_2 \}$ and we use $n \gg \log \frac{1}{\alpha} \approx N \gg 1$.

Now we introduce 
$$
B_1(n) = \{ \theta \in \mathbb{S}^1 \colon \sum_{i \in G} r_i \ge cn \}
$$
and set $E_n = B_1(n) \cup B_2(n)$.
Then 
$$
\log \prod_{j=1}^n \left| \partial_x f(\hat{X}_j(\theta)) \right| \ge cn \text{ for every } \theta \in \mathbb{S}^1 \setminus E_n.
$$

Considering \eqref{eq.b2}, it remains to show that $m(B_1(n)) \le \text{const } e^{-\gamma \sqrt{n}}$, for some $\gamma > 0$. 
This is done using Lemma~\ref{v.lemma2.6} and a large deviations argument. 
For completeness, we provide a sketch of the proof, for details see \cite[Section 2]{Vi97}.
Let $0 \le q \le m-1$ be fixed and denote $G_q = \{ i \in G \colon \nu_i \equiv q \mod m \}$.
Let $m_q  =  \max \{ j \colon mj + q \le n \}$ (note $m_q \approx m \approx \sqrt{n}$) and for each $0 \le j \le m_q$ we let $\hat{r}_j = r_i$ if $mj = q = \nu_i$, for some $i \in G_q$, and $\hat{r}_j = 0$ otherwise.
Notice that $G_q$ and the $\hat{r}_j$ are functions of $\theta$. 
So, we write
$$
\Omega_q (\rho_0 , \cdots , \rho_{m_q}) = \{ \theta \in \mathbb{S}^1 \setminus B_2(n) \colon \hat{r}_j = \rho_j \text{ for } 0 \le j \le m_j \}
$$
where for each $j$ either $\rho_j = 0$ or $\rho_j \ge \displaystyle \frac{1}{D-1} \left(\frac{1}{D} - \frac{2\eta}{D-1} \right) \log \frac{1}{\alpha}$; we also assume the $\rho_j$ not to be simultaneously zero.
For $0 \le j \le m_q$ and $\omega_{mj + q + l} \in \mathcal{P}_{mj + q + l}$, $\hat{Y}_0 = \varphi^{mj + q + l} (\hat{X}_0|\omega{mj + q = l})$ is an admissible curve and we have defined $l$ in such a way that $mj + q + l  = m(j+1) + 1 - M$. 
Therefore, we can apply Lemma~\ref{v.lemma2.6} to obtain
$$
m(\{ \theta \in \omega_{mj + q + l} \colon \hat{r}_{j+1} = \rho \}) \le C_*C_3 e^{-5\beta \rho},
$$
for all $\displaystyle \rho \ge  \frac{1}{D-1} \left(\frac{1}{D} - \frac{2\eta}{D-1}  \right) \log \frac{1}{\alpha}$.
Here $C_*$ is a uniform upper bound for the metric distortion of the iterates of $g$. 
Thus, we are in position to apply the large deviation argument as in \cite{Vi97} to finish the proof.


\bigskip

\flushleft

{\bf Vanderlei Horita} (vanderlei.horita\@@unesp.br)\\
Departamento de Matem\'{a}tica, IBILCE/UNESP \\
Rua Crist\'{o}v\~{a}o Colombo 2265\\
15054-000 S. J. Rio Preto, SP, Brazil

\bigskip

\flushleft

{\bf Nivaldo Muniz} (nmuniz\@@demat.ufma.br)\\
Departamento de Matem\'{a}tica, UFMA \\
Avenida dos Portugueses, S/N \\
65000-000 S\~{a}o Lu\'{\i}s, MA, Brazil

\bigskip

\flushleft

{\bf Olivier Sester} (olivier.sester@univ-eiffel.fr) \\
Univ Gustave Eiffel, Univ Paris Est, CNRS, LAMA UMR8050 \\
F-77447 Marne-la-Vall\'ee, France

\end{document}